\let\emptyset\varnothing  % defines emptyset to be the rounder symbol ``varnothing'' 
\title{Higher Frieze Patterns}
\author{Jordan McMahon}
\begin{document}

\newtheorem{lm}{Lemma}[section]
\newtheorem{prop}[lm]{Proposition}
\newtheorem{conj}[lm]{Conjecture}
\newtheorem{eg}[lm]{Example}
\newtheorem*{claim}{Claim}
\newtheorem{satz}[lm]{Satz}

\newtheorem*{corollary}{Corollary}
\newtheorem{cor}[lm]{Corollary}
\newtheorem{theorem}[lm]{Theorem}
\newtheorem*{thm}{Theorem}

\theoremstyle{definition}
\newtheorem{defn}{Definition}[section]
\newtheorem*{defini}{Definition}
\newtheorem*{definitionen}{Definitionen}
\newtheorem{ueb}{\"Ubungsbeispiel}
\newtheorem{bsp}[lm]{Beispiel}
\newtheorem{bspe}[lm]{Beispiele}
\newtheorem*{exas}{Beispiele}
\newtheorem*{eigen}{Eigenschaften}
\newtheorem*{rem}{Remark}
\newtheorem{remark}{Remark}
\newtheorem{qu}{Question}
\newcommand{\bea}{\begin{eqnarray*}}
\newcommand{\eea}{\end{eqnarray*}}

\begin{abstract}
Frieze patterns have an interesting combinatorial structure, which has proven very useful in the study of cluster algebras. We introduce $(k,n)$-frieze patterns, a natural generalisation of the classical notion. A generalisation of the bijective correspondence between frieze patterns of width $n$ and clusters of Pl\"ucker coordinates in the cluster structure of the Grassmannian $\mathrm{Gr}(2,n+3)$ is obtained.
\end{abstract}

\maketitle
\tableofcontents

\newcommand{\jm}[1]{\textcolor{brown}{#1}}   % color for Jordan
\newcommand{\kb}[1]{\textcolor{cyan}{#1}}    % color for Karin
\newcommand{\phd}[1]{\textcolor{orange}{#1}}  % organisatorial 
\definecolor{light-gray}{gray}{0.8}
\newcommand{\rd}[1]{\textcolor{red}{#1}}  % organisatorial 

\section{Introduction}

Frieze patterns were introduced by Coxeter in 1971 \cite{cox}, and a bijection between frieze patterns of width $n$ and triangulations of $(n+3)$-gons was established by Conway and Coxeter shortly thereafter \cite{cc1}\cite{cc2}. This impressive result received wider interest with the advent of cluster algebras, introduced by Fomin and Zelevinsky in \cite{fz1}\cite{fz2}. Frieze patterns satisfy many nice combinatorial properties, such as being invariant under a glide reflection, and are therefore  periodic. There already exist many generalisations of frieze patterns, see \cite{mg} for a comprehensive introduction to the subject. 

A first motivating example of a cluster algebra was given by the homogeneous coordinate ring $\mathbb{C}[\mathrm{Gr}(2,n+3)]$ of the Grassmannian of 2-dimensional subspaces in $\mathbb{C}^{n+3}$, the ring being generated by Pl\"ucker coordinates, and subject to Pl\"ucker relations. Each cluster of Pl\"ucker coordinates in the cluster structure of $\mathbb{C}[\mathrm{Gr}(2,n+3)]$ corresponds to a cluster-tilted algebra of Dynkin type $A_n$ \cite[Prop 12.7]{fz2}. In particular, each cluster in this cluster algebra corresponds to a triangulation of an $(n+3)$-gon, just as each frieze pattern of width $n$ does. This connection was formalised in \cite{cch}, where the Caldero-Chapoton formula was introduced for clusters and found to determine the entries in the corresponding frieze pattern. 

However, while cluster algebras for Grassmannians $\mathrm{Gr}(k,n)$ are well understood in the case that $k=2$, they are by no means limited to this case. In \cite{sco}, Scott proved that the homogeneous coordinate ring $\mathbb{C}[\mathrm{Gr}(k,n)]$ is a cluster algebra for all values of $2\leq k\leq n/2$. We denote the set of clusters of Pl\"ucker coordinates in the cluster structure of $\mathbb{C}[\mathrm{Gr}(k,n)]$ by $\mathcal{A}_{k,n}$ and throughout this paper we will consider $k$ and $n$ to be integers with $2\leq k\leq n/2$,

The motivation for this paper is to fully describe the clusters in $\mathcal{A}_{k,n}$ with $k\geq 3$ in terms of frieze patterns. To this purpose, we introduce \emph{$(k,n)$-frieze patterns} or \emph{higher frieze patterns}. However it turns out that there are significantly more $(k,n)$-frieze patterns than clusters in $\mathcal{A}_{k,n}$. Therefore, we primarily consider \emph{geometric $(3,n)$-frieze patterns}, which we will show are precisely those higher frieze patterns that correspond to a cluster in $\mathcal{A}_{3,n}$, and moreover satisfy a generalised version of the unimodular rule that defines the classical frieze patterns. 

In the classical case, a frieze of width $n$ corresponds to a cluster-tilting object of type $A_n$; this leads to a more general definition of frieze patterns as functions on a repetition quiver, see \cite{ars} or \cite{mg} for details. A similar approach may be considered for higher frieze patterns using the higher Auslander-Reiten theory introduced by Iyama in \cite{iy1}, \cite{iy2}. In particular, a higher frieze pattern may be seen as a function on the cylinder of a higher Auslander algebra of type $A$, invariant under a higher-dimensional glide reflection. To find where higher frieze patterns lie in the pantheon of generalisations of frieze patterns, we show that each $(k,n)$-frieze pattern determines an $\mathrm{SL}_k$-frieze pattern of width $n-k-1$. This complements a result of \cite[Proposition 3.2.1]{mgost}, that each $\mathrm{SL}_3$-frieze pattern of width $n-k-1$ is found to determine a point in the Grassmannian $\mathrm{Gr}(k,n)$. 

We remark here that Oppermann and Thomas \cite{ot} found a generalisation of the bijection between triangulations of $(n+3)$-gons and cluster-tilting objects of type $A_n$ for higher cluster-tilting theory using triangulations of cyclic polytopes. In a sequel paper \cite{mc2}, we will instead associate a particular class of clusters in $\mathcal{A}_{k,n}$ with superimposed triangulations. This illustrates contrasting combinatorics for these two generalisations of the combinatorial model for cluster algebras of type $A$.

\section{Coxeter's Frieze Patterns}

In the sense of Coxeter, a \emph{frieze pattern} \cite{cox} is an array of numbers satisfying the following conditions:
\begin{itemize}
\item The array has finitely many rows (though infinitely many columns are needed)
\item The two uppermost and bottommost rows are fixed such the first and final rows consist of only 0's; the second and penultimate rows are rows consisting of only 1's.
\item Consecutive rows are displayed with a shift, and every diamond $$\begin{tikzpicture}
\node(a) at (0,0){$a$};
\node(b) at (-0.5,-0.5){$b$};
\node(c) at (0.5,-0.5){$c$};
\node(d) at (0,-1){$d$};
\end{tikzpicture}$$
satisfies the \emph{unimodular rule}: $bc-ad=1$. 
\end{itemize}
Note that we omit, by convention, the top and bottom rows of zeroes from the frieze pattern. Such an array satisfying instead that every $k\times k$-minor has determinant one, in place of the unimodular rule, is called an \emph{$\mathrm{SL}_k$-frieze pattern} \cite{cr} (see also \cite{mg}). An $\mathrm{SL}_2$-frieze pattern is another name for one of Coxeter's frieze patterns. 
\begin{eg}
Examples of frieze patterns (in the sense of Coxeter) include:
$$\begin{tikzpicture}
\node(a) at (4,0){$1$};
\node(b) at (0.4,0){$1$};
\node(c) at (0.8,0){$1$};
\node(d) at (1.2,0){$1$};
\node(e) at (1.6,0){$1$};
\node(f) at (2,0){$1$};
\node(g) at (2.4,0){$1$};
\node(h) at (2.8,0){$1$};
\node(i) at (3.2,0){$1$};
\node(j) at (3.6,0){$1$};
\node(oa) at (0.2,0.4){$2$};
\node(ob) at (0.6,0.4){$1$};
\node(oc) at (1,0.4){$2$};
\node(od) at (1.4,0.4){$1$};
\node(oe) at (1.8,0.4){$2$};
\node(of) at (2.2,0.4){$1$};
\node(og) at (2.6,0.4){$2$};
\node(oh) at (3,0.4){$1$};
\node(oi) at (3.4,0.4){$2$};
\node(oj) at (3.8,0.4){$1$};
\node(pa) at (0,0.8){$1$};
\node(pb) at (0.4,0.8){$1$};
\node(pc) at (0.8,0.8){$1$};
\node(pd) at (1.2,0.8){$1$};
\node(pe) at (1.6,0.8){$1$};
\node(pf) at (2,0.8){$1$};
\node(pg) at (2.4,0.8){$1$};
\node(ph) at (2.8,0.8){$1$};
\node(pi) at (3.2,0.8){$1$};
\node(pj) at (3.6,0.8){$1$};
\end{tikzpicture}$$

$$\begin{tikzpicture}
\node(a) at (4,0){$1$};
\node(b) at (4.4,0){$1$};
\node(c) at (0.8,0){$1$};
\node(d) at (1.2,0){$1$};
\node(e) at (1.6,0){$1$};
\node(f) at (2,0){$1$};
\node(g) at (2.4,0){$1$};
\node(h) at (2.8,0){$1$};
\node(i) at (3.2,0){$1$};
\node(j) at (3.6,0){$1$};
\node(oa) at (4.2,0.4){$1$};
\node(ob) at (0.6,0.4){$3$};
\node(oc) at (1,0.4){$1$};
\node(od) at (1.4,0.4){$2$};
\node(oe) at (1.8,0.4){$2$};
\node(of) at (2.2,0.4){$1$};
\node(og) at (2.6,0.4){$3$};
\node(oh) at (3,0.4){$1$};
\node(oi) at (3.4,0.4){$2$};
\node(oj) at (3.8,0.4){$2$};
\node(pa) at (4,0.8){$1$};
\node(pb) at (0.4,0.8){$2$};
\node(pc) at (0.8,0.8){$2$};
\node(pd) at (1.2,0.8){$1$};
\node(pe) at (1.6,0.8){$3$};
\node(pf) at (2,0.8){$1$};
\node(pg) at (2.4,0.8){$2$};
\node(ph) at (2.8,0.8){$2$};
\node(pi) at (3.2,0.8){$1$};
\node(pj) at (3.6,0.8){$3$};
\node(qa) at (0.2,1.2){$1$};
\node(qb) at (0.6,1.2){$1$};
\node(qc) at (1,1.2){$1$};
\node(qd) at (1.4,1.2){$1$};
\node(qe) at (1.8,1.2){$1$};
\node(qf) at (2.2,1.2){$1$};
\node(qg) at (2.6,1.2){$1$};
\node(qh) at (3,1.2){$1$};
\node(qi) at (3.4,1.2){$1$};
\node(qj) at (3.8,1.2){$1$};
\end{tikzpicture}$$

$$\begin{tikzpicture}
\node(a) at (4,0){$1$};
\node(b) at (4.4,0){$1$};
\node(c) at (0.8,0){$1$};
\node(d) at (1.2,0){$1$};
\node(e) at (1.6,0){$1$};
\node(f) at (2,0){$1$};
\node(g) at (2.4,0){$1$};
\node(h) at (2.8,0){$1$};
\node(i) at (3.2,0){$1$};
\node(j) at (3.6,0){$1$};
\node(oa) at (4.2,0.4){$1$};
\node(ob) at (0.6,0.4){$3$};
\node(oc) at (1,0.4){$1$};
\node(od) at (1.4,0.4){$3$};
\node(oe) at (1.8,0.4){$1$};
\node(of) at (2.2,0.4){$3$};
\node(og) at (2.6,0.4){$1$};
\node(oh) at (3,0.4){$3$};
\node(oi) at (3.4,0.4){$1$};
\node(oj) at (3.8,0.4){$3$};
\node(pa) at (4,0.8){$2$};
\node(pb) at (0.4,0.8){$2$};
\node(pc) at (0.8,0.8){$2$};
\node(pd) at (1.2,0.8){$2$};
\node(pe) at (1.6,0.8){$2$};
\node(pf) at (2,0.8){$2$};
\node(pg) at (2.4,0.8){$2$};
\node(ph) at (2.8,0.8){$2$};
\node(pi) at (3.2,0.8){$2$};
\node(pj) at (3.6,0.8){$2$};
\node(qa) at (0.2,1.2){$3$};
\node(qb) at (0.6,1.2){$1$};
\node(qc) at (1,1.2){$3$};
\node(qd) at (1.4,1.2){$1$};
\node(qe) at (1.8,1.2){$3$};
\node(qf) at (2.2,1.2){$1$};
\node(qg) at (2.6,1.2){$3$};
\node(qh) at (3,1.2){$1$};
\node(qi) at (3.4,1.2){$3$};
\node(qj) at (3.8,1.2){$1$};
\node(ra) at (0,1.6){$1$};
\node(rb) at (0.4,1.6){$1$};
\node(rc) at (0.8,1.6){$1$};
\node(rd) at (1.2,1.6){$1$};
\node(re) at (1.6,1.6){$1$};
\node(rf) at (2,1.6){$1$};
\node(rg) at (2.4,1.6){$1$};
\node(rh) at (2.8,1.6){$1$};
\node(ri) at (3.2,1.6){$1$};
\node(rj) at (3.6,1.6){$1$};
\end{tikzpicture}$$

$$\begin{tikzpicture}
\node(a) at (4,0){$1$};
\node(b) at (4.4,0){$1$};
\node(c) at (0.8,0){$1$};
\node(d) at (1.2,0){$1$};
\node(e) at (1.6,0){$1$};
\node(f) at (2,0){$1$};
\node(g) at (2.4,0){$1$};
\node(h) at (2.8,0){$1$};
\node(i) at (3.2,0){$1$};
\node(j) at (3.6,0){$1$};
\node(oa) at (4.2,0.4){$2$};
\node(ob) at (0.6,0.4){$4$};
\node(oc) at (1,0.4){$1$};
\node(od) at (1.4,0.4){$2$};
\node(oe) at (1.8,0.4){$2$};
\node(of) at (2.2,0.4){$2$};
\node(og) at (2.6,0.4){$1$};
\node(oh) at (3,0.4){$4$};
\node(oi) at (3.4,0.4){$1$};
\node(oj) at (3.8,0.4){$2$};
\node(pa) at (4,0.8){$3$};
\node(pb) at (0.4,0.8){$3$};
\node(pc) at (0.8,0.8){$3$};
\node(pd) at (1.2,0.8){$1$};
\node(pe) at (1.6,0.8){$3$};
\node(pf) at (2,0.8){$3$};
\node(pg) at (2.4,0.8){$1$};
\node(ph) at (2.8,0.8){$3$};
\node(pi) at (3.2,0.8){$3$};
\node(pj) at (3.6,0.8){$1$};
\node(qa) at (0.2,1.2){$2$};
\node(qb) at (0.6,1.2){$2$};
\node(qc) at (1,1.2){$2$};
\node(qd) at (1.4,1.2){$1$};
\node(qe) at (1.8,1.2){$4$};
\node(qf) at (2.2,1.2){$1$};
\node(qg) at (2.6,1.2){$2$};
\node(qh) at (3,1.2){$2$};
\node(qi) at (3.4,1.2){$2$};
\node(qj) at (3.8,1.2){$1$};
\node(ra) at (0,1.6){$1$};
\node(rb) at (0.4,1.6){$1$};
\node(rc) at (0.8,1.6){$1$};
\node(rd) at (1.2,1.6){$1$};
\node(re) at (1.6,1.6){$1$};
\node(rf) at (2,1.6){$1$};
\node(rg) at (2.4,1.6){$1$};
\node(rh) at (2.8,1.6){$1$};
\node(ri) at (3.2,1.6){$1$};
\node(rj) at (3.6,1.6){$1$};
\end{tikzpicture}$$
\end{eg}

A frieze pattern is said to be of width $n$ if it has $n$ rows strictly between the border rows of ones at the top and bottom. 

\section{Background}

\subsection{Pl\"ucker Relations}
Recall that the Grassmannian of all $k$-dimensional subspaces of $\mathbb{C}^n$, $\mathrm{Gr}(k,n)$, can be embedded into the projective space $\mathbb{P}(\wedge^k(\mathbb{C}^n))$ via the Pl\"ucker embedding. The coordinates of $\wedge^k(\mathbb{C}^n)$ are called the \emph{Pl\"ucker coordinates} and are indexed by the $k$-multisets  $I=\{i_1,i_2,\cdots,i_k\}$ with elements from $\{1,\cdots ,n\}.$ The coordinate defined by $\{i_1,i_2,\cdots,i_k\}$ will be denoted $p_{i_1i_2\cdots i_k}$. 

In general, the ordering of elements in a Pl\"ucker coordinate may not be known;
the definition of the Pl\"ucker coordinates may be extended to allow for this. By convention, the Pl\"ucker coordinates possess an antisymmetry:
\begin{align} p_{i_1\cdots i_ri_s\cdots i_k}=-p_{i_1\cdots i_si_r\cdots i_k}\label{eq1}.\end{align}
 In particular, if $i_r=i_s$ then $p_{i_1\cdots i_ri_r\cdots i_k}=0.$
The Pl\"ucker embedding satisfies the determinantal \emph{Pl\"ucker relations}. In the case $k=2$, the Pl\"ucker relations are

$$p_{ac}p_{bd}=p_{ab}p_{cd}+p_{ad}p_{bc},$$
where $1\leq a<b<c<d\leq n$. Now, let $p_{Ii_1i_2\cdots i_r}$ be the Pl\"ucker coordinate for the subset $I\cup\{i_1\}\cup\{i_2\}\cup\cdots\cup\{i_r\}$. 
Then, more generally, the Pl\"ucker relations for $k>2$ are $$p_{Iac}p_{Ibd}=p_{Iab}p_{Icd}+p_{Iad}p_{Ibc},$$ where $I$ is a $(k-2)$-subset of $\{1,\cdots, n\}$  with $\{a,b,c,d\}\cap I=\emptyset$. Another set of generating relations for the $(k,n)$-Pl\"ucker relations is the set of relations

\begin{align}\label{eq3}
\sum_{r=0}^k(-1)^np_{i_1i_2\cdots i_{k-1}j_r}p_{j_0\cdots \widehat{j_r}\cdots j_k}=0,
\end{align}
 where $1\leq i_0<i_1<\cdots<i_{k-1}\leq n$ and $1\leq j_0< j_1< \cdots j_k\leq n$. See for example \cite{mar} for a reference on Pl\"ucker relations.

\subsection{Higher-Dimensional Auslander-Reiten Theory}

In the context of generalising classical Auslander-Reiten theory to higher dimensions, Iyama introduced in \cite{iy2} the notion of a higher Auslander algebra. For a quiver $Q$ of Dynkin type, there is an explicit description of the quiver of the $n$-Auslander algebra of the path algebra $\mathbb{C}Q$. Let $Q$ be the following quiver:
$$\begin{tikzpicture}
\node(a) at (-2,0){1};
\node(b) at (-1,0){2};
\node(c) at (0,0){$\cdots$};
\node(d) at (1,0){$d$};

\draw[->](a) edge(b);
\draw[->](b) edge(c);
\draw[->](c) edge(d);
\end{tikzpicture}$$
Set $e_i$ to be the $\mathbb{Z}^n$ vector with 1 in the $i$-th coordinate, and 0 in every other coordinate. Then let $$v_i=\begin{cases} -e_i & i=1,\\ e_{i-1} -e_{i} & 2\leq i\leq n.\end{cases}$$
The quiver of the $m$-Auslander algebra $A_d^{(m)}$ can be described as follows.

\begin{itemize}
\item The vertices are $$(Q^{(m)}_d)_0=\{(l_1,l_2,\cdots,l_{m+1})\in \mathbb{Z}^{m+1}|l_1\geq 1;l_2,\cdots,l_{m+1}\geq 0;l_1+l_2+\cdots l_{m+1}\leq d\}.$$
\item For each vertex $\underline{l}\in Q^{(m)}_0$, there is an arrow $\underline{l}\rightarrow \underline{l}+v_i$ wherever $\underline{l}+v_i$ is in $(Q^{(m)}_d)_0$.
\end{itemize}
For example, the quiver of the $1$-Auslander algebra $A^{(1)}_4$ is:
$$\begin{tikzpicture}[xscale=5,yscale=2.5]
\node(xa) at (-2,1){$13$};
\node(xb) at (-1.6,1){$12$};
\node(xc) at (-1.2,1){$11$};
\node(xd) at (-0.8,1){$10$};

\node(xab) at (-1.8,1.2){$22$};
\node(xac) at (-1.4,1.2){$21$};
\node(xad) at (-1,1.2){$20$};

\node(xbb) at (-1.6,1.4){$31$};
\node(xbc) at (-1.2,1.4){$30$};

\node(xcc) at (-1.4,1.6){$40$};

\draw[->](xa) edge(xab);
\draw[->](xb) edge(xac);
\draw[->](xc) edge(xad);

\draw[->](xbb) edge(xac);
\draw[->](xbc) edge(xad);

\draw[->](xbb) edge(xcc);

\draw[->](xab) edge(xb);
\draw[->](xac) edge(xc);
\draw[->](xad) edge(xd);

\draw[->](xab) edge(xbb);
\draw[->](xac) edge(xbc);

\draw[->](xcc) edge(xbc);
\end{tikzpicture}$$
The quiver of the $2$-Auslander algebra $A^{(2)}_4$ is:

$$\begin{tikzpicture}[xscale=6,yscale=2]

\node(a) at (-2,0){$130$};
\node(b) at (-1.6,0){$120$};
\node(c) at (-1.2,0){$110$};
\node(d) at (-0.8,0){$100$};
\node(ab) at (-1.8,0.2){$220$};
\node(ac) at (-1.4,0.2){$210$};
\node(ad) at (-1,0.2){$200$};
\node(bb) at (-1.6,0.4){$310$};
\node(bc) at (-1.2,0.4){$300$};
\node(cc) at (-1.4,0.6){$400$};
\draw[->](a) edge (ab);
\draw[->](b) edge (ac);
\draw[->](c) edge (ad);
\draw[->](bb) edge (ac);
\draw[->](bc) edge (ad);
\draw[->](bb) edge (cc);
\draw[->](ab) edge (b);
\draw[->](ac) edge (c);
\draw[->](ad) edge (d);
\draw[->](ab) edge (bb);
\draw[->](ac) edge (bc);
\draw[->](cc) edge (bc);
\node(xa) at (-1.8,-1){$121$};
\node(xb) at (-1.4,-1){$111$};
\node(xc) at (-1,-1){$101$};
\node(xab) at (-1.6,-0.8){$211$};
\node(xac) at (-1.2,-0.8){$201$};
\node(xbb) at (-1.4,-0.6){$301$};
\draw[->](xa) edge(xab);
\draw[->](xb) edge(xac);
\draw[->](xbb) edge(xac);
\draw[->](xab) edge(xb);
\draw[->](xac) edge(xc);
\draw[->](xab) edge(xbb);
\node(xxa) at (-1.6,-2){$112$};
\node(xxb) at (-1.2,-2){$102$};
\node(xxab) at (-1.4,-1.8){$202$};
\draw[->](xxa) edge(xxab);
\draw[->](xxab) edge(xxb);
\node(xxxa) at (-1.4,-3){$103$};
\draw[<-](a) edge (xa);
\draw[<-](b) edge (xb);
\draw[<-](c) edge (xc);
\draw[<-](ab) edge (xab);
\draw[<-](ac) edge (xac);
\draw[<-](bb) edge (xbb);

\draw[<-](xa) edge (xxa);
\draw[<-](xb) edge (xxb);
\draw[<-](xab) edge (xxab);

\draw[<-](xxa) edge (xxxa);

\end{tikzpicture}$$

\section{Higher Frieze Patterns}

In the two-dimensional case, a glide reflection is a reflection about a line, composed with a translation along that line. An important property of frieze patterns is that they are invariant under a glide reflection and hence periodic. Pictorially, this means points on a frieze pattern of width $n$ can be parameterised by 2-subsets of $\{1,2,\cdots,n+3\}$, where each subset $\{i,j\}$ has at most four neighbours: $\{i-1,j\}$, $\{i,j-1\}$, $\{i+1,j\}$ and $\{i,j+1\}$, where addition is modulo $n+3$. The case where $n=3$ is shown below:

$$\begin{tikzpicture}[xscale=3]
\node(a) at (0,-0){$12$};
\node(b) at (0.4,-0){$23$};
\node(c) at (0.8,-0){$34$};
\node(d) at (1.2,-0){$45$};
\node(e) at (1.6,-0){$56$};
\node(f) at (2,-0){$16$};
\node(g) at (2.4,-0){$12$};
\node(h) at (2.8,-0){$23$};
\node(i) at (3.2,-0){$34$};
\node(j) at (3.6,-0){$45$};
\node(oa) at (0.2,-0.4){$13$};
\node(ob) at (0.6,-0.4){$24$};
\node(oc) at (1,-0.4){$35$};
\node(od) at (1.4,-0.4){$46$};
\node(oe) at (1.8,-0.4){$15$};
\node(of) at (2.2,-0.4){$26$};
\node(og) at (2.6,-0.4){$13$};
\node(oh) at (3,-0.4){$24$};
\node(oi) at (3.4,-0.4){$35$};
\node(oj) at (3.8,-0.4){$46$};
\node(pa) at (4,-0.8){$14$};
\node(pb) at (0.4,-0.8){$14$};
\node(pc) at (0.8,-0.8){$25$};
\node(pd) at (1.2,-0.8){$36$};
\node(pe) at (1.6,-0.8){$14$};
\node(pf) at (2,-0.8){$25$};
\node(pg) at (2.4,-0.8){$36$};
\node(ph) at (2.8,-0.8){$14$};
\node(pi) at (3.2,-0.8){$25$};
\node(pj) at (3.6,-0.8){$36$};
\node(qa) at (4.2,-1.2){$24$};
\node(qb) at (0.6,-1.2){$15$};
\node(qc) at (1,-1.2){$26$};
\node(qd) at (1.4,-1.2){$13$};
\node(qe) at (1.8,-1.2){$24$};
\node(qf) at (2.2,-1.2){$35$};
\node(qg) at (2.6,-1.2){$46$};
\node(qh) at (3,-1.2){$15$};
\node(qi) at (3.4,-1.2){$26$};
\node(qj) at (3.8,-1.2){$13$};
\node(ra) at (4,-1.6){$23$};
\node(rb) at (4.4,-1.6){$34$};
\node(rc) at (0.8,-1.6){$16$};
\node(rd) at (1.2,-1.6){$12$};
\node(re) at (1.6,-1.6){$23$};
\node(rf) at (2,-1.6){$34$};
\node(rg) at (2.4,-1.6){$45$};
\node(rh) at (2.8,-1.6){$56$};
\node(ri) at (3.2,-1.6){$16$};
\node(rj) at (3.6,-1.6){$12$};
%\draw [-,red] (-0.2,0) edge (0.8,-1.8);
\draw [-,red] (2,0.4) edge (0.8,-2);
\draw [-,red] (2,0.4) edge (3.2,-2);
\draw [-,red] (4.2,0.4) edge (3.2,-2);

\end{tikzpicture}$$
To define a frieze pattern, choose values $\{p_I\}$ for each 2-subset $I$ of $\{1,2,\cdots, n+3\}$ with the conditions that for any for any $i\leq n$, $p_{ii}=0$ and $p_{i(i+1)}=1$ (addition modulo $n+3$).
In this perspective, the unimodular rule is equivalent to the Pl\"ucker relations: any diamond has values given by $$\begin{tikzpicture}[xscale=3]
\node(a) at (0,0){$p_{x(y+1)}$};
\node(b) at (-0.5,-0.5){$p_{xy}$};
\node(c) at (0.5,-0.5){$p_{(x+1)(y+1)}$};
\node(d) at (0,-1){$p_{(x+1)y}$};\end{tikzpicture}$$
So the Pl\"ucker relations $$p_{xy}p_{(x+1)(y+1)}=p_{x(y+1)}p_{(x+1)y}+p_{x(x+1)}p_{y(y+1)}$$ imply the unimodular rule, using $p_{x(x+1)}p_{y(y+1)}=1$ as above. It then makes sense to generalise frieze patterns in the following sense. 
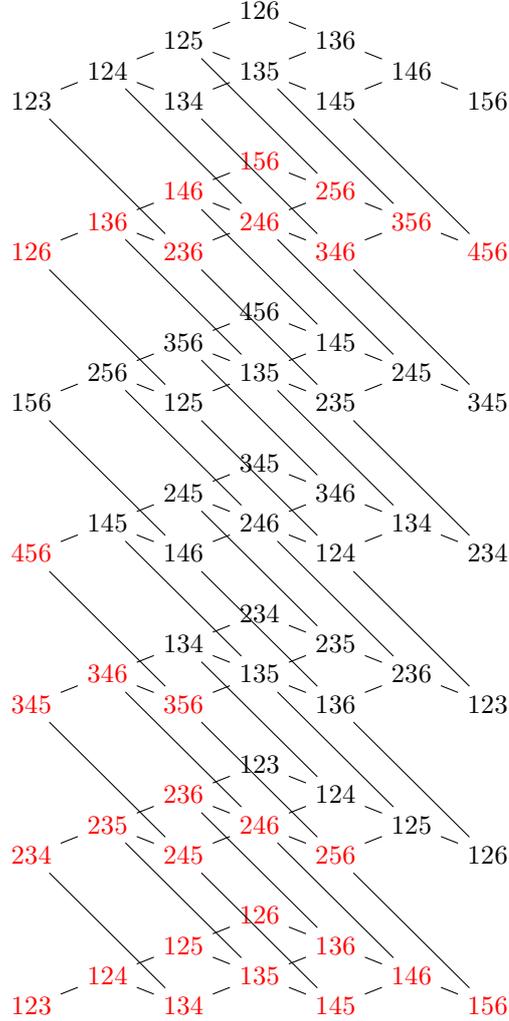
\begin{figure}
\caption{Coordinate labelling of a $(3,6)$-frieze pattern. The red vertices above indicate the cross-sectional triangle at $6$, and the red vertices below show the fundamental domain.}\label{fig1}
\begin{tikzpicture}[xscale=5,yscale=2]

\node(a) at (-2,0)[red]{123};
\node(b) at (-1.6,0)[red]{134};
\node(c) at (-1.2,0)[red]{145};
\node(d) at (-0.8,0)[red]{156};

\node(ab) at (-1.8,0.2)[red]{124};
\node(ac) at (-1.4,0.2)[red]{135};
\node(ad) at (-1,0.2)[red]{146};

\node(bb) at (-1.6,0.4)[red]{125};
\node(bc) at (-1.2,0.4)[red]{136};

\node(cc) at (-1.4,0.6)[red]{126};

\draw[-](a) edge (ab);
\draw[-](b) edge (ac);
\draw[-](c) edge (ad);

\draw[-](bb) edge (ac);
\draw[-](bc) edge (ad);

\draw[-](bb) edge (cc);

\draw[-](ab) edge (b);
\draw[-](ac) edge (c);
\draw[-](ad) edge (d);

\draw[-](ab) edge (bb);
\draw[-](ac) edge (bc);

\draw[-](cc) edge (bc);

\node(xa) at (-2,1)[red]{234};
\node(xb) at (-1.6,1)[red]{245};
\node(xc) at (-1.2,1)[red]{256};
\node(xd) at (-0.8,1){126};

\node(xab) at (-1.8,1.2)[red]{235};
\node(xac) at (-1.4,1.2)[red]{246};
\node(xad) at (-1,1.2){125};

\node(xbb) at (-1.6,1.4)[red]{236};
\node(xbc) at (-1.2,1.4){124};

\node(xcc) at (-1.4,1.6){123};

\draw[-](xa) edge(xab);
\draw[-](xb) edge(xac);
\draw[-](xc) edge(xad);

\draw[-](xbb) edge(xac);
\draw[-](xbc) edge(xad);

\draw[-](xbb) edge(xcc);

\draw[-](xab) edge(xb);
\draw[-](xac) edge(xc);
\draw[-](xad) edge(xd);

\draw[-](xab) edge(xbb);
\draw[-](xac) edge(xbc);

\draw[-](xcc) edge(xbc);

\node(xxa) at (-2,2)[red]{345};
\node(xxb) at (-1.6,2)[red]{356};
\node(xxc) at (-1.2,2){136};
\node(xxd) at (-0.8,2){123};

\node(xxab) at (-1.8,2.2)[red]{346};
\node(xxac) at (-1.4,2.2){135};
\node(xxad) at (-1,2.2){236};

\node(xxbb) at (-1.6,2.4){134};
\node(xxbc) at (-1.2,2.4){235};

\node(xxcc) at (-1.4,2.6){234};

\draw[-](xxa) edge(xxab);
\draw[-](xxb) edge(xxac);
\draw[-](xxc) edge(xxad);

\draw[-](xxbb) edge(xxac);
\draw[-](xxbc) edge(xxad);

\draw[-](xxbb) edge(xxcc);

\draw[-](xxab) edge(xxb);
\draw[-](xxac) edge(xxc);
\draw[-](xxad) edge(xxd);

\draw[-](xxab) edge(xxbb);
\draw[-](xxac) edge(xxbc);

\draw[-](xxcc) edge(xxbc);

\node(xxxa) at (-2,3)[red]{456};
\node(xxxb) at (-1.6,3){146};
\node(xxxc) at (-1.2,3){124};
\node(xxxd) at (-0.8,3){234};

\node(xxxab) at (-1.8,3.2){145};
\node(xxxac) at (-1.4,3.2){246};
\node(xxxad) at (-1,3.2){134};

\node(xxxbb) at (-1.6,3.4){245};
\node(xxxbc) at (-1.2,3.4){346};

\node(xxxcc) at (-1.4,3.6){345};

\draw[-](xxxa) edge(xxxab);
\draw[-](xxxb) edge(xxxac);
\draw[-](xxxc) edge(xxxad);

\draw[-](xxxbb) edge(xxxac);
\draw[-](xxxbc) edge(xxxad);

\draw[-](xxxbb) edge(xxxcc);

\draw[-](xxxab) edge(xxxb);
\draw[-](xxxac) edge(xxxc);
\draw[-](xxxad) edge(xxxd);

\draw[-](xxxab) edge(xxxbb);
\draw[-](xxxac) edge(xxxbc);

\draw[-](xxxcc) edge(xxxbc);

\node(xxxxa) at (-2,4){156};
\node(xxxxb) at (-1.6,4){125};
\node(xxxxc) at (-1.2,4){235};
\node(xxxxd) at (-0.8,4){345};

\node(xxxxab) at (-1.8,4.2){256};
\node(xxxxac) at (-1.4,4.2){135};
\node(xxxxad) at (-1,4.2){245};

\node(xxxxbb) at (-1.6,4.4){356};
\node(xxxxbc) at (-1.2,4.4){145};

\node(xxxxcc) at (-1.4,4.6){456};

\draw[-](xxxxa) edge(xxxxab);
\draw[-](xxxxb) edge(xxxxac);
\draw[-](xxxxc) edge(xxxxad);

\draw[-](xxxxbb) edge(xxxxac);
\draw[-](xxxxbc) edge(xxxxad);

\draw[-](xxxxbb) edge(xxxxcc);

\draw[-](xxxxab) edge(xxxxb);
\draw[-](xxxxac) edge(xxxxc);
\draw[-](xxxxad) edge(xxxxd);

\draw[-](xxxxab) edge(xxxxbb);
\draw[-](xxxxac) edge(xxxxbc);

\draw[-](xxxxcc) edge(xxxxbc);

\node(va) at (-2,5)[red]{126};
\node(vb) at (-1.6,5)[red]{236};
\node(vc) at (-1.2,5)[red]{346};
\node(vd) at (-0.8,5)[red]{456};

\node(vab) at (-1.8,5.2)[red]{136};
\node(vac) at (-1.4,5.2)[red]{246};
\node(vad) at (-1,5.2)[red]{356};

\node(vbb) at (-1.6,5.4)[red]{146};
\node(vbc) at (-1.2,5.4)[red]{256};

\node(vcc) at (-1.4,5.6)[red]{156};

\draw[-](va) edge(vab);
\draw[-](vb) edge(vac);
\draw[-](vc) edge(vad);

\draw[-](vbb) edge(vac);
\draw[-](vbc) edge(vad);

\draw[-](vbb) edge(vcc);

\draw[-](vab) edge(vb);
\draw[-](vac) edge(vc);
\draw[-](vad) edge(vd);

\draw[-](vab) edge(vbb);
\draw[-](vac) edge(vbc);

\draw[-](vcc) edge(vbc);

\node(vxa) at (-2,6){123};
\node(vxb) at (-1.6,6){134};
\node(vxc) at (-1.2,6){145};
\node(vxd) at (-0.8,6){156};

\node(vxab) at (-1.8,6.2){124};
\node(vxac) at (-1.4,6.2){135};
\node(vxad) at (-1,6.2){146};

\node(vxbb) at (-1.6,6.4){125};
\node(vxbc) at (-1.2,6.4){136};

\node(vxcc) at (-1.4,6.6){126};

\draw[-](vxa) edge(vxab);
\draw[-](vxb) edge(vxac);
\draw[-](vxc) edge(vxad);

\draw[-](vxbb) edge(vxac);
\draw[-](vxbc) edge(vxad);

\draw[-](vxbb) edge(vxcc);

\draw[-](vxab) edge(vxb);
\draw[-](vxac) edge(vxc);
\draw[-](vxad) edge(vxd);

\draw[-](vxab) edge(vxbb);
\draw[-](vxac) edge(vxbc);

\draw[-](vxcc) edge(vxbc);

\draw[-](b) edge (xa);
\draw[-](c) edge (xb);
\draw[-](d) edge (xc);
\draw[-](ac) edge (xab);
\draw[-](ad) edge (xac);
\draw[-](bc) edge (xbb);

\draw[-](xb) edge (xxa);
\draw[-](xc) edge (xxb);
\draw[-](xd) edge (xxc);
\draw[-](xac) edge (xxab);
\draw[-](xad) edge (xxac);
\draw[-](xbc) edge (xxbb);

\draw[-](xxb) edge (xxxa);
\draw[-](xxc) edge (xxxb);
\draw[-](xxd) edge (xxxc);
\draw[-](xxac) edge (xxxab);
\draw[-](xxad) edge (xxxac);
\draw[-](xxbc) edge (xxxbb);

\draw[-](xxxb) edge (xxxxa);
\draw[-](xxxc) edge (xxxxb);
\draw[-](xxxd) edge (xxxxc);
\draw[-](xxxac) edge (xxxxab);
\draw[-](xxxad) edge (xxxxac);
\draw[-](xxxbc) edge (xxxxbb);

\draw[-](xxxxb) edge (va);
\draw[-](xxxxc) edge (vb);
\draw[-](xxxxd) edge (vc);
\draw[-](xxxxac) edge (vab);
\draw[-](xxxxad) edge (vac);
\draw[-](xxxxbc) edge (vbb);

\draw[-](vb) edge (vxa);
\draw[-](vc) edge (vxb);
\draw[-](vd) edge (vxc);
\draw[-](vac) edge (vxab);
\draw[-](vad) edge (vxac);
\draw[-](vbc) edge (vxbb);
\end{tikzpicture}
\end{figure}
\begin{defn}\label{knf}
Let $k$ and $n$ be positive integers such that $2\leq k\leq n/2$. A $(k,n)$-frieze pattern, $\mathcal{P}$, is a map from the $k$-multisets of $\{1,2,\cdots,n\}$ to the non-negative integers (sending $I$ to $p_I$) such that:
\begin{itemize}
\item  Each $k$-multiset $I$ with elements from $\{1,2,\cdots,n\}$, is associated a non-negative integer value $p_I$.
\item Each \emph{interval subset}, that is a subset $I=\{i,i+1,\cdots, i+k-1\}$ for a given $1\leq i\leq n$ and under addition modulo $n$, satisfies $p_I=1$.  
\item The set of values $\{p_I\}$ satisfy the $(k,n)$-Pl\"ucker relations.
\end{itemize}
\end{defn}

A $(2,n)$-frieze pattern is just a Coxeter frieze pattern of width $n-3$. The first and final rows consist of only zeroes - these (omitted) rows consist of the coordinates corresponding to a 2-multiset $\{i,i\}$, for some $1\leq i\leq n$, and Equation \ref{eq1} implies that each $p_{ii}=0$. 

\begin{defn}
For $k\geq 3$, we call a $(k,n)$-frieze pattern a \emph{higher frieze pattern}. Let $\mathbf{S}$ be the set of $k$-subsets of $\{1,2,\cdots,n\}$. The \emph{underlying graph} of a $(k,n)$-frieze pattern for $k\geq 3$ is the graph with vertices indexed by the elements of $\mathbf{S}\times \mathbb{Z}$ and consisting of edges between $(I,m)$ and $(J,m)$ wherever $I\setminus\{i\}=J\setminus\{i+1\}$ for some $1\leq i< n$ as well as edges between $(I,m)$ and $(J,m+1)$ wherever $I\setminus\{n\}=J\setminus\{1\}$. The \emph{coordinate label} of a vertex $(I,m)\in\mathbf{S}\times \mathbb{Z}$, where $I=\{i_1,i_2,\cdots,i_k\}$, in the underlying graph of a higher frieze pattern is $i_1i_2\cdots i_k$. A higher frieze pattern is displayed on its underlying graph by setting each vertex $(I,m)\in \mathbf{S}\times \mathbb{Z}$ to have value $p_I$. 

The \emph{fundamental domain} of a (higher) frieze pattern is given by the collection of values $\{p_I\}$ indexed by the $k$-subsets $I\subset\{1,2,\cdots,n\}$. The \emph{underlying graph of the fundamental domain} of a (higher) frieze pattern is the graph with vertices indexed by the elements of $\mathbf{S}$ and consisting of edges between $I$ and $J$ wherever $I\setminus\{i\}=J\setminus\{i+1\}$ for some $1\leq i< n$. \end{defn}

The coordinates of $(3,6)$-frieze pattern and its fundamental domain are given in Figure \ref{fig1}. 
One complicating factor in the study of higher frieze patterns is that they should be visualised in a higher dimension. In contrast, any Pl\"ucker exchange relation should still be visualised on a two dimensional plane. We introduce a convention for $(3,n)$-frieze patterns to make sense of this disparity. 

\begin{defn}
For a given $(3,n)$-frieze pattern and element $x\in \{1,2,\cdots,n\}$, the restriction of the underlying graph of the frieze pattern to the coordinates $\{(I,0)\in\mathbf{S}\times \mathbb{Z}|x\in I\}$ is the \emph{cross-sectional triangle at $x$}. An example of a cross-sectional triangle is given in Figure \ref{fig1}. There is an order on the elements $\{1,2,\cdots,n\}\setminus \{x\}$ given by $x+1<x+2<\cdots<x-1$; denote this order by $i<_x j$. 
\end{defn}
Now we can generalise diamond relations to higher dimensions.\newpage

\begin{defn}\label{gd} Let $\{x,i,j\}$ be a coordinate in the cross-sectional triangle at $x$ with $i<_x j$, and let $l$ and $m$ be integers such that $i+m<_x j-l$. An \emph{$l\times m$-diamond} (or \emph{generalised diamond}) is formed by the points $A=p_{xij}$, $B=p_{xi(j-l)}$, $C=p_{x(i+m)j}$ and $D=p_{x(i+m)(j-l)}$, as indicated in Figure \ref{fig3}. 

Setting $E=p_{xi(i+m)}$ and $F=p_{x(j-l)j}$, then the \emph{generalised diamond relation} holds: $$BC-AD=EF.$$
\end{defn}
The generalised diamond relation is illustrated in Figure \ref{fig3}.

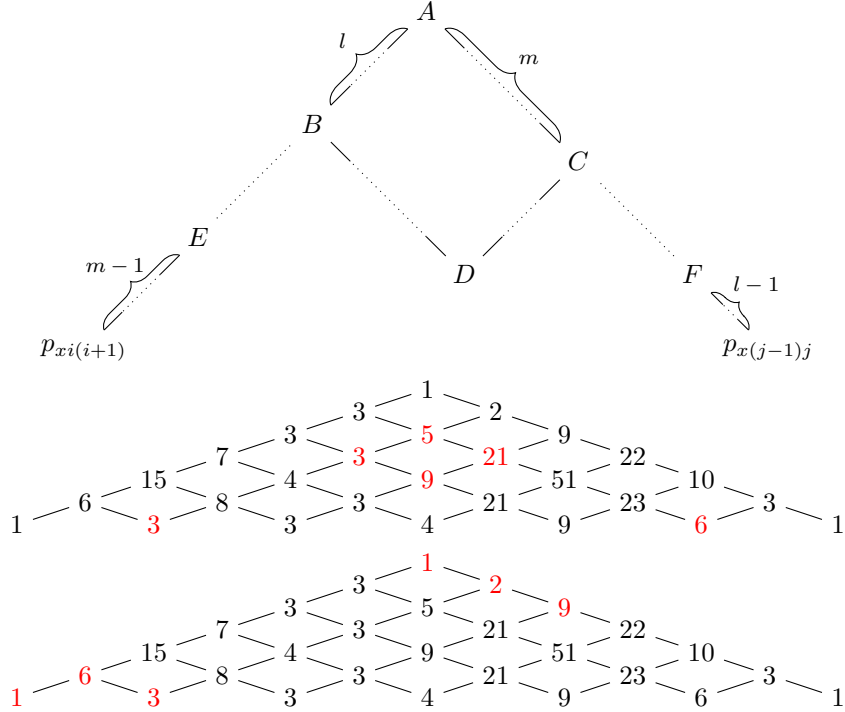
\begin{figure}
\caption{The generalised diamond relation, and two examples coming from a cross-sectional triangle from the $(3,9)$-frieze pattern in Figure \ref{ng2}. The generalised diamond relation depicted above shows $3\times 21-5\times 9=3\times 6$ and the generalised diamond relation depicted below shows $6\times 2-3\times 1=1\times 9$.} \label{fig3} \begin{tikzpicture}
\node(a) at (0,0){$A$};
\node(b) at (-1.5,-1.5){$B$};
\node(c) at (2,-2){$C$};
\node(d) at (0.5,-3.5){$D$};
%\node(x) at (-7.5,-4.5){$1$};
%\node(y) at (-0.5,2.5){$1$};
%\node(z) at (6.5,-4.5){$1$};
\node(e) at (-3,-3){$E$};
\node(f) at (3.5,-3.5){$F$};

\node(g) at (-4.5,-4.5){$p_{xi(i+1)}$};
\node(h) at (4.5,-4.5){$p_{x(j-1)j}$};

%\draw[-](x) edge (-7,-4);
%\draw[-](-6.5,-4.5) edge (-7,-4);

%\draw[-,dotted](-7,-4) edge (-1,2);
%\draw[-,dotted](-6,-4) edge (-0.5,1.5);
%\draw[-](-6.5,-4.5) edge (-6,-4);
%\draw[-](6.5,-4.5) edge (6,-4);
\draw[-](h) edge (4.1,-4.1);
\draw[-](f) edge (3.9,-3.9);
\draw[-,dotted](4.1,-4.1) edge (3.9,-3.9);

%\draw[-](-3.5,-4.5) edge (-4,-4);
%\draw[-,dotted](-6.25,-4.25) edge (-4.25,-4.25);
%\draw[-,dotted](-3.75,-4.25) edge (4.25,-4.25);

\draw[-](g) edge (-4,-4);
\draw[-,dotted](-3.5,-3.5) edge (-4,-4);
\draw[-](-3.5,-3.5) edge (e);

%\draw[-,dotted](6,-4) edge (0,2);
%\draw[-,dotted](5.5,-4.5) edge (-0.5,1.5);
\draw[-,dotted](e) edge (b);
\draw[-,dotted](f) edge (c);

%\draw[-](y) edge (0,2);
%\draw[-](y) edge (-1,2);
%\draw[-](-0.5,1.5) edge (-1,2);
%\draw[-](-0.5,1.5) edge (0,2);

%draw[-](z) edge (6,-4);
%\draw[-](5.5,-4.5) edge (6,-4);

\draw[-](b) edge (-1,-1);
\draw[-,dotted](-0.5,-0.5) edge (-1,-1);
\draw[-](a) edge (-0.5,-0.5);

\draw[-](d) edge (1,-3);
\draw[-,dotted](1.5,-2.5) edge (1,-3);
\draw[-](c) edge (1.5,-2.5);

\draw[-](a) edge (0.5,-0.5);
\draw[-,dotted](1.5,-1.5) edge (0.5,-0.5);
\draw[-](c) edge (1.5,-1.5);

\draw[-](b) edge (-1,-2);
\draw[-,dotted](0,-3) edge (-1,-2);
\draw[-](d) edge (0,-3);

\draw [decorate,decoration={brace,amplitude=7pt},]
(b) -- (a) node [black,midway,xshift=-0.35cm,yshift=0.35cm] 
{\footnotesize $l$};
\draw [decorate,decoration={brace,amplitude=7pt},]
(a) -- (c) node [black,midway,xshift=0.35cm,yshift=0.35cm] 
{\footnotesize $m$};
\draw [decorate,decoration={brace,amplitude=6pt},]
(g) -- (e) node [black,midway,xshift=-0.35cm,yshift=0.35cm] 
{\footnotesize $m-1$};
\draw [decorate,decoration={brace,amplitude=5pt},]
(f) -- (h) node [black,midway,xshift=0.35cm,yshift=0.35cm] 
{\footnotesize $l-1$};
\end{tikzpicture}
\begin{tikzpicture}[xscale=4.5,yscale=1.5]

\node(a) at (-2,0){1};
\node(b) at (-1.6,0)[red]{3};
\node(c) at (-1.2,0){3};
\node(d) at (-0.8,0){4};
\node(e) at (-0.4,0){9};
\node(f) at (-0,0)[red]{6};
\node(g) at (0.4,0){1};

\node(ab) at (-1.8,0.2){6};
\node(ac) at (-1.4,0.2){8};
\node(ad) at (-1,0.2){3};
\node(ae) at (-0.6,0.2){21};
\node(af) at (-0.2,0.2){23};
\node(ag) at (0.2,0.2){3};

\node(bb) at (-1.6,0.4){15};
\node(bc) at (-1.2,0.4){4};
\node(bd) at (-0.8,0.4)[red]{9};
\node(be) at (-0.4,0.4){51};
\node(bf) at (-0,0.4){10};

\node(cc) at (-1.4,0.6){7};
\node(cd) at (-1,0.6)[red]{3};
\node(ce) at (-0.6,0.6)[red]{21};
\node(cf) at (-0.2,0.6){22};

\node(dc) at (-1.2,0.8){3};
\node(dd) at (-0.8,0.8)[red]{5};
\node(de) at (-0.4,0.8){9};

\node(ed) at (-1,1){3};
\node(ee) at (-0.6,1){2};

\node(fd) at (-0.8,1.2){1};

\draw[-](a) edge (ab);
\draw[-](b) edge (ac);
\draw[-](c) edge (ad);
\draw[-](d) edge (ae);
\draw[-](e) edge (af);
\draw[-](f) edge (ag);

\draw[-](bb) edge (ac);
\draw[-](bc) edge (ad);
\draw[-](bd) edge (ae);
\draw[-](be) edge (af);
\draw[-](bf) edge (ag);

\draw[-](bb) edge (cc);
\draw[-](bc) edge (cd);
\draw[-](bd) edge (ce);
\draw[-](be) edge (cf);

\draw[-](ab) edge (b);
\draw[-](ac) edge (c);
\draw[-](ad) edge (d);
\draw[-](ae) edge (e);
\draw[-](af) edge (f);
\draw[-](ag) edge (g);

\draw[-](ab) edge (bb);
\draw[-](ac) edge (bc);
\draw[-](ad) edge (bd);
\draw[-](ae) edge (be);
\draw[-](af) edge (bf);

\draw[-](cc) edge (bc);
\draw[-](cd) edge (bd);
\draw[-](ce) edge (be);
\draw[-](cf) edge (bf);

\draw[-](cc) edge (dc);
\draw[-](cd) edge (dd);
\draw[-](ce) edge (de);

\draw[-](dc) edge (ed);
\draw[-](dd) edge (ee);

\draw[-](dc) edge (cd);
\draw[-](dd) edge (ce);
\draw[-](de) edge (cf);

\draw[-](ed) edge (dd);
\draw[-](ee) edge (de);

\draw[-](ed) edge (fd);

\draw[-](fd) edge (ee);
\end{tikzpicture}

\begin{tikzpicture}[xscale=4.5,yscale=1.5]

\node(a) at (-2,0)[red]{1};
\node(b) at (-1.6,0)[red]{3};
\node(c) at (-1.2,0){3};
\node(d) at (-0.8,0){4};
\node(e) at (-0.4,0){9};
\node(f) at (-0,0){6};
\node(g) at (0.4,0){1};

\node(ab) at (-1.8,0.2)[red]{6};
\node(ac) at (-1.4,0.2){8};
\node(ad) at (-1,0.2){3};
\node(ae) at (-0.6,0.2){21};
\node(af) at (-0.2,0.2){23};
\node(ag) at (0.2,0.2){3};

\node(bb) at (-1.6,0.4){15};
\node(bc) at (-1.2,0.4){4};
\node(bd) at (-0.8,0.4){9};
\node(be) at (-0.4,0.4){51};
\node(bf) at (-0,0.4){10};

\node(cc) at (-1.4,0.6){7};
\node(cd) at (-1,0.6){3};
\node(ce) at (-0.6,0.6){21};
\node(cf) at (-0.2,0.6){22};

\node(dc) at (-1.2,0.8){3};
\node(dd) at (-0.8,0.8){5};
\node(de) at (-0.4,0.8)[red]{9};

\node(ed) at (-1,1){3};
\node(ee) at (-0.6,1)[red]{2};

\node(fd) at (-0.8,1.2)[red]{1};

\draw[-](a) edge (ab);
\draw[-](b) edge (ac);
\draw[-](c) edge (ad);
\draw[-](d) edge (ae);
\draw[-](e) edge (af);
\draw[-](f) edge (ag);

\draw[-](bb) edge (ac);
\draw[-](bc) edge (ad);
\draw[-](bd) edge (ae);
\draw[-](be) edge (af);
\draw[-](bf) edge (ag);

\draw[-](bb) edge (cc);
\draw[-](bc) edge (cd);
\draw[-](bd) edge (ce);
\draw[-](be) edge (cf);

\draw[-](ab) edge (b);
\draw[-](ac) edge (c);
\draw[-](ad) edge (d);
\draw[-](ae) edge (e);
\draw[-](af) edge (f);
\draw[-](ag) edge (g);

\draw[-](ab) edge (bb);
\draw[-](ac) edge (bc);
\draw[-](ad) edge (bd);
\draw[-](ae) edge (be);
\draw[-](af) edge (bf);

\draw[-](cc) edge (bc);
\draw[-](cd) edge (bd);
\draw[-](ce) edge (be);
\draw[-](cf) edge (bf);

\draw[-](cc) edge (dc);
\draw[-](cd) edge (dd);
\draw[-](ce) edge (de);

\draw[-](dc) edge (ed);
\draw[-](dd) edge (ee);

\draw[-](dc) edge (cd);
\draw[-](dd) edge (ce);
\draw[-](de) edge (cf);

\draw[-](ed) edge (dd);
\draw[-](ee) edge (de);

\draw[-](ed) edge (fd);

\draw[-](fd) edge (ee);
\end{tikzpicture}
\end{figure}

\begin{figure}
\caption{Two examples of geometric $(3,6)$-frieze patterns}\label{geo}
\begin{tikzpicture}[scale=1.5]

\node(a) at (-2,0){1};
\node(b) at (-1.6,0){1};
\node(c) at (-1.2,0){1};
\node(d) at (-0.8,0){1};

\node(ab) at (-1.8,0.2){1};
\node(ac) at (-1.4,0.2){2};
\node(ad) at (-1,0.2){2};

\node(bb) at (-1.6,0.4){1};
\node(bc) at (-1.2,0.4){3};

\node(cc) at (-1.4,0.6){1};

\draw[-](a) edge (ab);
\draw[-](b) edge (ac);
\draw[-](c) edge (ad);

\draw[-](bb) edge (ac);
\draw[-](bc) edge (ad);

\draw[-](bb) edge (cc);

\draw[-](ab) edge (b);
\draw[-](ac) edge (c);
\draw[-](ad) edge (d);

\draw[-](ab) edge (bb);
\draw[-](ac) edge (bc);

\draw[-](cc) edge (bc);

\node(xa) at (-2,1){1};
\node(xb) at (-1.6,1){2};
\node(xc) at (-1.2,1){3};
\node(xd) at (-0.8,1){1};

\node(xab) at (-1.8,1.2){3};
\node(xac) at (-1.4,1.2){5};
\node(xad) at (-1,1.2){1};

\node(xbb) at (-1.6,1.4){6};
\node(xbc) at (-1.2,1.4){1};

\node(xcc) at (-1.4,1.6){1};

\draw[-](xa) edge(xab);
\draw[-](xb) edge(xac);
\draw[-](xc) edge(xad);

\draw[-](xbb) edge(xac);
\draw[-](xbc) edge(xad);

\draw[-](xbb) edge(xcc);

\draw[-](xab) edge(xb);
\draw[-](xac) edge(xc);
\draw[-](xad) edge(xd);

\draw[-](xab) edge(xbb);
\draw[-](xac) edge(xbc);

\draw[-](xcc) edge(xbc);

\node(xxa) at (-2,2){1};
\node(xxb) at (-1.6,2){3};
\node(xxc) at (-1.2,2){3};
\node(xxd) at (-0.8,2){1};

\node(xxab) at (-1.8,2.2){3};
\node(xxac) at (-1.4,2.2){2};
\node(xxad) at (-1,2.2){6};

\node(xxbb) at (-1.6,2.4){1};
\node(xxbc) at (-1.2,2.4){3};

\node(xxcc) at (-1.4,2.6){1};

\draw[-](xxa) edge(xxab);
\draw[-](xxb) edge(xxac);
\draw[-](xxc) edge(xxad);

\draw[-](xxbb) edge(xxac);
\draw[-](xxbc) edge(xxad);

\draw[-](xxbb) edge(xxcc);

\draw[-](xxab) edge(xxb);
\draw[-](xxac) edge(xxc);
\draw[-](xxad) edge(xxd);

\draw[-](xxab) edge(xxbb);
\draw[-](xxac) edge(xxbc);

\draw[-](xxcc) edge(xxbc);

\node(xxxa) at (-2,3){1};
\node(xxxb) at (-1.6,3){2};
\node(xxxc) at (-1.2,3){1};
\node(xxxd) at (-0.8,3){1};

\node(xxxab) at (-1.8,3.2){1};
\node(xxxac) at (-1.4,3.2){5};
\node(xxxad) at (-1,3.2){1};

\node(xxxbb) at (-1.6,3.4){2};
\node(xxxbc) at (-1.2,3.4){3};

\node(xxxcc) at (-1.4,3.6){1};

\draw[-](xxxa) edge(xxxab);
\draw[-](xxxb) edge(xxxac);
\draw[-](xxxc) edge(xxxad);

\draw[-](xxxbb) edge(xxxac);
\draw[-](xxxbc) edge(xxxad);

\draw[-](xxxbb) edge(xxxcc);

\draw[-](xxxab) edge(xxxb);
\draw[-](xxxac) edge(xxxc);
\draw[-](xxxad) edge(xxxd);

\draw[-](xxxab) edge(xxxbb);
\draw[-](xxxac) edge(xxxbc);

\draw[-](xxxcc) edge(xxxbc);

\node(xxxxa) at (-2,4){1};
\node(xxxxb) at (-1.6,4){1};
\node(xxxxc) at (-1.2,4){3};
\node(xxxxd) at (-0.8,4){1};

\node(xxxxab) at (-1.8,4.2){3};
\node(xxxxac) at (-1.4,4.2){2};
\node(xxxxad) at (-1,4.2){2};

\node(xxxxbb) at (-1.6,4.4){3};
\node(xxxxbc) at (-1.2,4.4){1};

\node(xxxxcc) at (-1.4,4.6){1};

\draw[-](xxxxa) edge(xxxxab);
\draw[-](xxxxb) edge(xxxxac);
\draw[-](xxxxc) edge(xxxxad);

\draw[-](xxxxbb) edge(xxxxac);
\draw[-](xxxxbc) edge(xxxxad);

\draw[-](xxxxbb) edge(xxxxcc);

\draw[-](xxxxab) edge(xxxxb);
\draw[-](xxxxac) edge(xxxxc);
\draw[-](xxxxad) edge(xxxxd);

\draw[-](xxxxab) edge(xxxxbb);
\draw[-](xxxxac) edge(xxxxbc);

\draw[-](xxxxcc) edge(xxxxbc);

\node(va) at (-2,5){1};
\node(vb) at (-1.6,5){6};
\node(vc) at (-1.2,5){3};
\node(vd) at (-0.8,5){1};

\node(vab) at (-1.8,5.2){3};
\node(vac) at (-1.4,5.2){5};
\node(vad) at (-1,5.2){3};

\node(vbb) at (-1.6,5.4){2};
\node(vbc) at (-1.2,5.4){3};

\node(vcc) at (-1.4,5.6){1};

\draw[-](va) edge(vab);
\draw[-](vb) edge(vac);
\draw[-](vc) edge(vad);

\draw[-](vbb) edge(vac);
\draw[-](vbc) edge(vad);

\draw[-](vbb) edge(vcc);

\draw[-](vab) edge(vb);
\draw[-](vac) edge(vc);
\draw[-](vad) edge(vd);

\draw[-](vab) edge(vbb);
\draw[-](vac) edge(vbc);

\draw[-](vcc) edge(vbc);

\node(vxa) at (-2,6){1};
\node(vxb) at (-1.6,6){1};
\node(vxc) at (-1.2,6){1};
\node(vxd) at (-0.8,6){1};

\node(vxab) at (-1.8,6.2){1};
\node(vxac) at (-1.4,6.2){2};
\node(vxad) at (-1,6.2){2};

\node(vxbb) at (-1.6,6.4){1};
\node(vxbc) at (-1.2,6.4){3};

\node(vxcc) at (-1.4,6.6){1};

\draw[-](vxa) edge(vxab);
\draw[-](vxb) edge(vxac);
\draw[-](vxc) edge(vxad);

\draw[-](vxbb) edge(vxac);
\draw[-](vxbc) edge(vxad);

\draw[-](vxbb) edge(vxcc);

\draw[-](vxab) edge(vxb);
\draw[-](vxac) edge(vxc);
\draw[-](vxad) edge(vxd);

\draw[-](vxab) edge(vxbb);
\draw[-](vxac) edge(vxbc);

\draw[-](vxcc) edge(vxbc);

\draw[-](b) edge (xa);
\draw[-](c) edge (xb);
\draw[-](d) edge (xc);
\draw[-](ac) edge (xab);
\draw[-](ad) edge (xac);
\draw[-](bc) edge (xbb);

\draw[-](xb) edge (xxa);
\draw[-](xc) edge (xxb);
\draw[-](xd) edge (xxc);
\draw[-](xac) edge (xxab);
\draw[-](xad) edge (xxac);
\draw[-](xbc) edge (xxbb);

\draw[-](xxb) edge (xxxa);
\draw[-](xxc) edge (xxxb);
\draw[-](xxd) edge (xxxc);
\draw[-](xxac) edge (xxxab);
\draw[-](xxad) edge (xxxac);
\draw[-](xxbc) edge (xxxbb);

\draw[-](xxxb) edge (xxxxa);
\draw[-](xxxc) edge (xxxxb);
\draw[-](xxxd) edge (xxxxc);
\draw[-](xxxac) edge (xxxxab);
\draw[-](xxxad) edge (xxxxac);
\draw[-](xxxbc) edge (xxxxbb);

\draw[-](xxxxb) edge (va);
\draw[-](xxxxc) edge (vb);
\draw[-](xxxxd) edge (vc);
\draw[-](xxxxac) edge (vab);
\draw[-](xxxxad) edge (vac);
\draw[-](xxxxbc) edge (vbb);

\draw[-](vb) edge (vxa);
\draw[-](vc) edge (vxb);
\draw[-](vd) edge (vxc);
\draw[-](vac) edge (vxab);
\draw[-](vad) edge (vxac);
\draw[-](vbc) edge (vxbb);

\node(za) at (-4,0){1};
\node(zb) at (-3.6,0){1};
\node(zc) at (-3.2,0){1};
\node(zd) at (-2.8,0){1};

\node(zab) at (-3.8,0.2){1};
\node(zac) at (-3.4,0.2){3};
\node(zad) at (-3,0.2){1};

\node(zbb) at (-3.6,0.4){2};
\node(zbc) at (-3.2,0.4){2};

\node(zcc) at (-3.4,0.6){1};

\draw[-](za) edge (zab);
\draw[-](zb) edge (zac);
\draw[-](zc) edge (zad);

\draw[-](zbb) edge (zac);
\draw[-](zbc) edge (zad);

\draw[-](zbb) edge (zcc);

\draw[-](zab) edge (zb);
\draw[-](zac) edge (zc);
\draw[-](zad) edge (zd);

\draw[-](zab) edge (zbb);
\draw[-](zac) edge (zbc);

\draw[-](zcc) edge (zbc);

\node(zxa) at (-4,1){1};
\node(zxb) at (-3.6,1){2};
\node(zxc) at (-3.2,1){4};
\node(zxd) at (-2.8,1){1};

\node(zxab) at (-3.8,1.2){4};
\node(zxac) at (-3.4,1.2){3};
\node(zxad) at (-3,1.2){2};

\node(zxbb) at (-3.6,1.4){4};
\node(zxbc) at (-3.2,1.4){1};

\node(zxcc) at (-3.4,1.6){1};

\draw[-](zxa) edge(zxab);
\draw[-](zxb) edge(zxac);
\draw[-](zxc) edge(zxad);

\draw[-](zxbb) edge(zxac);
\draw[-](zxbc) edge(zxad);

\draw[-](zxbb) edge(zxcc);

\draw[-](zxab) edge(zxb);
\draw[-](zxac) edge(zxc);
\draw[-](zxad) edge(zxd);

\draw[-](zxab) edge(zxbb);
\draw[-](zxac) edge(zxbc);

\draw[-](zxcc) edge(zxbc);

\node(zxxa) at (-4,2){1};
\node(zxxb) at (-3.6,2){4};
\node(zxxc) at (-3.2,2){2};
\node(zxxd) at (-2.8,2){1};

\node(zxxab) at (-3.8,2.2){2};
\node(zxxac) at (-3.4,2.2){3};
\node(zxxad) at (-3,2.2){4};

\node(zxxbb) at (-3.6,2.4){1};
\node(zxxbc) at (-3.2,2.4){4};

\node(zxxcc) at (-3.4,2.6){1};

\draw[-](zxxa) edge(zxxab);
\draw[-](zxxb) edge(zxxac);
\draw[-](zxxc) edge(zxxad);

\draw[-](zxxbb) edge(zxxac);
\draw[-](zxxbc) edge(zxxad);

\draw[-](zxxbb) edge(zxxcc);

\draw[-](zxxab) edge(zxxb);
\draw[-](zxxac) edge(zxxc);
\draw[-](zxxad) edge(zxxd);

\draw[-](zxxab) edge(zxxbb);
\draw[-](zxxac) edge(zxxbc);

\draw[-](zxxcc) edge(zxxbc);

\node(zxxxa) at (-4,3){1};
\node(zxxxb) at (-3.6,3){1};
\node(zxxxc) at (-3.2,3){1};
\node(zxxxd) at (-2.8,3){1};

\node(zxxxab) at (-3.8,3.2){1};
\node(zxxxac) at (-3.4,3.2){3};
\node(zxxxad) at (-3,3.2){1};

\node(zxxxbb) at (-3.6,3.4){2};
\node(zxxxbc) at (-3.2,3.4){2};

\node(zxxxcc) at (-3.4,3.6){1};

\draw[-](zxxxa) edge(zxxxab);
\draw[-](zxxxb) edge(zxxxac);
\draw[-](zxxxc) edge(zxxxad);

\draw[-](zxxxbb) edge(zxxxac);
\draw[-](zxxxbc) edge(zxxxad);

\draw[-](zxxxbb) edge(zxxxcc);

\draw[-](zxxxab) edge(zxxxb);
\draw[-](zxxxac) edge(zxxxc);
\draw[-](zxxxad) edge(zxxxd);

\draw[-](zxxxab) edge(zxxxbb);
\draw[-](zxxxac) edge(zxxxbc);

\draw[-](zxxxcc) edge(zxxxbc);

\node(zxxxxa) at (-4,4){1};
\node(zxxxxb) at (-3.6,4){2};
\node(zxxxxc) at (-3.2,4){4};
\node(zxxxxd) at (-2.8,4){1};

\node(zxxxxab) at (-3.8,4.2){4};
\node(zxxxxac) at (-3.4,4.2){3};
\node(zxxxxad) at (-3,4.2){2};

\node(zxxxxbb) at (-3.6,4.4){4};
\node(zxxxxbc) at (-3.2,4.4){1};

\node(zxxxxcc) at (-3.4,4.6){1};

\draw[-](zxxxxa) edge(zxxxxab);
\draw[-](zxxxxb) edge(zxxxxac);
\draw[-](zxxxxc) edge(zxxxxad);

\draw[-](zxxxxbb) edge(zxxxxac);
\draw[-](zxxxxbc) edge(zxxxxad);

\draw[-](zxxxxbb) edge(zxxxxcc);

\draw[-](zxxxxab) edge(zxxxxb);
\draw[-](zxxxxac) edge(zxxxxc);
\draw[-](zxxxxad) edge(zxxxxd);

\draw[-](zxxxxab) edge(zxxxxbb);
\draw[-](zxxxxac) edge(zxxxxbc);

\draw[-](zxxxxcc) edge(zxxxxbc);

\node(zva) at (-4,5){1};
\node(zvb) at (-3.6,5){4};
\node(zvc) at (-3.2,5){2};
\node(zvd) at (-2.8,5){1};

\node(zvab) at (-3.8,5.2){2};
\node(zvac) at (-3.4,5.2){3};
\node(zvad) at (-3,5.2){4};

\node(zvbb) at (-3.6,5.4){1};
\node(zvbc) at (-3.2,5.4){4};

\node(zvcc) at (-3.4,5.6){1};

\draw[-](zva) edge(zvab);
\draw[-](zvb) edge(zvac);
\draw[-](zvc) edge(zvad);

\draw[-](zvbb) edge(zvac);
\draw[-](zvbc) edge(zvad);

\draw[-](zvbb) edge(zvcc);

\draw[-](zvab) edge(zvb);
\draw[-](zvac) edge(zvc);
\draw[-](zvad) edge(zvd);

\draw[-](zvab) edge(zvbb);
\draw[-](zvac) edge(zvbc);

\draw[-](zvcc) edge(zvbc);

\node(zvxa) at (-4,6){1};
\node(zvxb) at (-3.6,6){1};
\node(zvxc) at (-3.2,6){1};
\node(zvxd) at (-2.8,6){1};

\node(zvxab) at (-3.8,6.2){1};
\node(zvxac) at (-3.4,6.2){3};
\node(zvxad) at (-3,6.2){1};

\node(zvxbb) at (-3.6,6.4){2};
\node(zvxbc) at (-3.2,6.4){2};

\node(zvxcc) at (-3.4,6.6){1};

\draw[-](zvxa) edge(zvxab);
\draw[-](zvxb) edge(zvxac);
\draw[-](zvxc) edge(zvxad);

\draw[-](zvxbb) edge(zvxac);
\draw[-](zvxbc) edge(zvxad);

\draw[-](zvxbb) edge(zvxcc);

\draw[-](zvxab) edge(zvxb);
\draw[-](zvxac) edge(zvxc);
\draw[-](zvxad) edge(zvxd);

\draw[-](zvxab) edge(zvxbb);
\draw[-](zvxac) edge (zvxbc);

\draw[-](zvxcc) edge (zvxbc);

\draw[-](zb) edge (zxa);
\draw[-](zc) edge (zxb);
\draw[-](zd) edge (zxc);
\draw[-](zac) edge (zxab);
\draw[-](zad) edge (zxac);
\draw[-](zbc) edge (zxbb);

\draw[-](zxb) edge (zxxa);
\draw[-](zxc) edge (zxxb);
\draw[-](zxd) edge (zxxc);
\draw[-](zxac) edge (zxxab);
\draw[-](zxad) edge (zxxac);
\draw[-](zxbc) edge (zxxbb);

\draw[-](zxxb) edge (zxxxa);
\draw[-](zxxc) edge (zxxxb);
\draw[-](zxxd) edge (zxxxc);
\draw[-](zxxac) edge (zxxxab);
\draw[-](zxxad) edge (zxxxac);
\draw[-](zxxbc) edge (zxxxbb);

\draw[-](zxxxb) edge (zxxxxa);
\draw[-](zxxxc) edge (zxxxxb);
\draw[-](zxxxd) edge (zxxxxc);
\draw[-](zxxxac) edge (zxxxxab);
\draw[-](zxxxad) edge (zxxxxac);
\draw[-](zxxxbc) edge (zxxxxbb);

\draw[-](zxxxxb) edge (zva);
\draw[-](zxxxxc) edge (zvb);
\draw[-](zxxxxd) edge (zvc);
\draw[-](zxxxxac) edge (zvab);
\draw[-](zxxxxad) edge (zvac);
\draw[-](zxxxxbc) edge (zvbb);

\draw[-](zvb) edge (zvxa);
\draw[-](zvc) edge (zvxb);
\draw[-](zvd) edge (zvxc);
\draw[-](zvac) edge (zvxab);
\draw[-](zvad) edge (zvxac);
\draw[-](zvbc) edge (zvxbb);

\end{tikzpicture}
\end{figure}
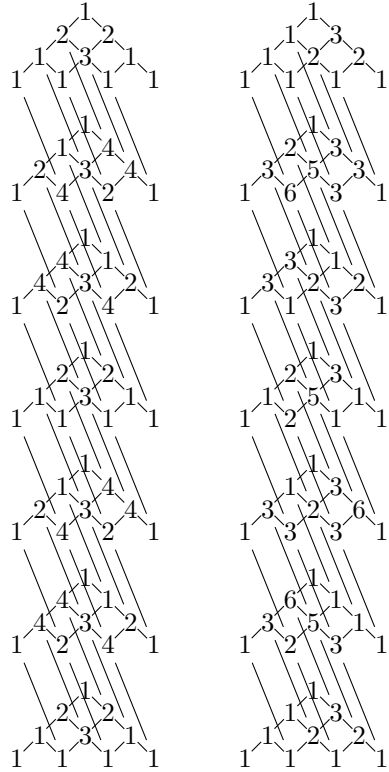
\begin{figure}
\caption{An example of a $(3,9)$-frieze pattern that is not geometric. We have omitted the edges between different cross-sectional triangles.}\label{ng2}
\begin{tikzpicture}[xscale=4.5,yscale=1]
\node(a) at (-2,0){1};
\node(b) at (-1.6,0){3};
\node(c) at (-1.2,0){3};
\node(d) at (-0.8,0){4};
\node(e) at (-0.4,0){9};
\node(f) at (-0,0){6};
\node(g) at (0.4,0){1};

\node(ab) at (-1.8,0.2){6};
\node(ac) at (-1.4,0.2){8};
\node(ad) at (-1,0.2){3};
\node(ae) at (-0.6,0.2){21};
\node(af) at (-0.2,0.2){23};
\node(ag) at (0.2,0.2){3};

\node(bb) at (-1.6,0.4){15};
\node(bc) at (-1.2,0.4){4};
\node(bd) at (-0.8,0.4){9};
\node(be) at (-0.4,0.4){51};
\node(bf) at (-0,0.4){10};

\node(cc) at (-1.4,0.6){7};
\node(cd) at (-1,0.6){3};
\node(ce) at (-0.6,0.6){21};
\node(cf) at (-0.2,0.6){22};

\node(dc) at (-1.2,0.8){3};
\node(dd) at (-0.8,0.8){5};
\node(de) at (-0.4,0.8){9};

\node(ed) at (-1,1){3};
\node(ee) at (-0.6,1){2};

\node(fd) at (-0.8,1.2){1};

\draw[-](a) edge (ab);
\draw[-](b) edge (ac);
\draw[-](c) edge (ad);
\draw[-](d) edge (ae);
\draw[-](e) edge (af);
\draw[-](f) edge (ag);

\draw[-](bb) edge (ac);
\draw[-](bc) edge (ad);
\draw[-](bd) edge (ae);
\draw[-](be) edge (af);
\draw[-](bf) edge (ag);

\draw[-](bb) edge (cc);
\draw[-](bc) edge (cd);
\draw[-](bd) edge (ce);
\draw[-](be) edge (cf);

\draw[-](ab) edge (b);
\draw[-](ac) edge (c);
\draw[-](ad) edge (d);
\draw[-](ae) edge (e);
\draw[-](af) edge (f);
\draw[-](ag) edge (g);

\draw[-](ab) edge (bb);
\draw[-](ac) edge (bc);
\draw[-](ad) edge (bd);
\draw[-](ae) edge (be);
\draw[-](af) edge (bf);

\draw[-](cc) edge (bc);
\draw[-](cd) edge (bd);
\draw[-](ce) edge (be);
\draw[-](cf) edge (bf);

\draw[-](cc) edge (dc);
\draw[-](cd) edge (dd);
\draw[-](ce) edge (de);

\draw[-](dc) edge (ed);
\draw[-](dd) edge (ee);

\draw[-](dc) edge (cd);
\draw[-](dd) edge (ce);
\draw[-](de) edge (cf);

\draw[-](ed) edge (dd);
\draw[-](ee) edge (de);

\draw[-](ed) edge (fd);

\draw[-](fd) edge (ee);

\node(xa) at (-2,2){1};
\node(xb) at (-1.6,2){3};
\node(xc) at (-1.2,2){9};
\node(xd) at (-0.8,2){22};
\node(xe) at (-0.4,2){15};
\node(xf) at (-0,2){3};
\node(xg) at (0.4,2){1};

\node(xab) at (-1.8,2.2){3};
\node(xac) at (-1.4,2.2){5};
\node(xad) at (-1,2.2){51};
\node(xae) at (-0.6,2.2){57};
\node(xaf) at (-0.2,2.2){8};
\node(xag) at (0.2,2.2){3};

\node(xbb) at (-1.6,2.4){2};
\node(xbc) at (-1.2,2.4){21};
\node(xbd) at (-0.8,2.4){126};
\node(xbe) at (-0.4,2.4){26};
\node(xbf) at (-0,2.4){3};

\node(xcc) at (-1.4,2.6){4};
\node(xcd) at (-1,2.6){51};
\node(xce) at (-0.6,2.6){57};
\node(xcf) at (-0.2,2.6){7};

\node(xdc) at (-1.2,2.8){9};
\node(xdd) at (-0.8,2.8){23};
\node(xde) at (-0.4,2.8){15};

\node(xed) at (-1,3){4};
\node(xee) at (-0.6,3){6};

\node(xfd) at (-0.8,3.2){1};

\draw[-](xa) edge (xab);
\draw[-](xb) edge (xac);
\draw[-](xc) edge (xad);
\draw[-](xd) edge (xae);
\draw[-](xe) edge (xaf);
\draw[-](xf) edge (xag);

\draw[-](xbb) edge (xac);
\draw[-](xbc) edge (xad);
\draw[-](xbd) edge (xae);
\draw[-](xbe) edge (xaf);
\draw[-](xbf) edge (xag);

\draw[-](xbb) edge (xcc);
\draw[-](xbc) edge (xcd);
\draw[-](xbd) edge (xce);
\draw[-](xbe) edge (xcf);

\draw[-](xab) edge (xb);
\draw[-](xac) edge (xc);
\draw[-](xad) edge (xd);
\draw[-](xae) edge (xe);
\draw[-](xaf) edge (xf);
\draw[-](xag) edge (xg);

\draw[-](xab) edge (xbb);
\draw[-](xac) edge (xbc);
\draw[-](xad) edge (xbd);
\draw[-](xae) edge (xbe);
\draw[-](xaf) edge (xbf);

\draw[-](xcc) edge (xbc);
\draw[-](xcd) edge (xbd);
\draw[-](xce) edge (xbe);
\draw[-](xcf) edge (xbf);

\draw[-](xcc) edge (xdc);
\draw[-](xcd) edge (xdd);
\draw[-](xce) edge (xde);

\draw[-](xdc) edge (xed);
\draw[-](xdd) edge (xee);

\draw[-](xdc) edge (xcd);
\draw[-](xdd) edge (xce);
\draw[-](xde) edge (xcf);

\draw[-](xed) edge (xdd);
\draw[-](xee) edge (xde);

\draw[-](xed) edge (xfd);

\draw[-](xfd) edge (xee);

\node(xxa) at (-2,4){1};
\node(xxb) at (-1.6,4){4};
\node(xxc) at (-1.2,4){10};
\node(xxd) at (-0.8,4){7};
\node(xxe) at (-0.4,4){2};
\node(xxf) at (-0,4){2};
\node(xxg) at (0.4,4){1};

\node(xxab) at (-1.8,4.2){2};
\node(xxac) at (-1.4,4.2){23};
\node(xxad) at (-1,4.2){26};
\node(xxae) at (-0.6,4.2){4};
\node(xxaf) at (-0.2,4.2){5};
\node(xxag) at (0.2,4.2){4};

\node(xxbb) at (-1.6,4.4){9};
\node(xxbc) at (-1.2,4.4){57};
\node(xxbd) at (-0.8,4.4){12};
\node(xxbe) at (-0.4,4.4){3};
\node(xxbf) at (-0,4.4){9};

\node(xxcc) at (-1.4,4.6){22};
\node(xxcd) at (-1,4.6){26};
\node(xxce) at (-0.6,4.6){4};
\node(xxcf) at (-0.2,4.6){4};

\node(xxdc) at (-1.2,4.8){10};
\node(xxdd) at (-0.8,4.8){8};
\node(xxde) at (-0.4,4.8){2};

\node(xxed) at (-1,5){3};
\node(xxee) at (-0.6,5){3};

\node(xxfd) at (-0.8,5.2){1};

\draw[-](xxa) edge (xxab);
\draw[-](xxb) edge (xxac);
\draw[-](xxc) edge (xxad);
\draw[-](xxd) edge (xxae);
\draw[-](xxe) edge (xxaf);
\draw[-](xxf) edge (xxag);

\draw[-](xxbb) edge (xxac);
\draw[-](xxbc) edge (xxad);
\draw[-](xxbd) edge (xxae);
\draw[-](xxbe) edge (xxaf);
\draw[-](xxbf) edge (xxag);

\draw[-](xxbb) edge (xxcc);
\draw[-](xxbc) edge (xxcd);
\draw[-](xxbd) edge (xxce);
\draw[-](xxbe) edge (xxcf);

\draw[-](xxab) edge (xxb);
\draw[-](xxac) edge (xxc);
\draw[-](xxad) edge (xxd);
\draw[-](xxae) edge (xxe);
\draw[-](xxaf) edge (xxf);
\draw[-](xxag) edge (xxg);

\draw[-](xxab) edge (xxbb);
\draw[-](xxac) edge (xxbc);
\draw[-](xxad) edge (xxbd);
\draw[-](xxae) edge (xxbe);
\draw[-](xxaf) edge (xxbf);

\draw[-](xxcc) edge (xxbc);
\draw[-](xxcd) edge (xxbd);
\draw[-](xxce) edge (xxbe);
\draw[-](xxcf) edge (xxbf);

\draw[-](xxcc) edge (xxdc);
\draw[-](xxcd) edge (xxdd);
\draw[-](xxce) edge (xxde);

\draw[-](xxdc) edge (xxed);
\draw[-](xxdd) edge (xxee);

\draw[-](xxdc) edge (xxcd);
\draw[-](xxdd) edge (xxce);
\draw[-](xxde) edge (xxcf);

\draw[-](xxed) edge (xxdd);
\draw[-](xxee) edge (xxde);

\draw[-](xxed) edge (xxfd);

\draw[-](xxfd) edge (xxee);
\node(va) at (-2,6){1};
\node(vb) at (-1.6,6){3};
\node(vc) at (-1.2,6){3};
\node(vd) at (-0.8,6){4};
\node(ve) at (-0.4,6){9};
\node(vf) at (-0,6){6};
\node(vg) at (0.4,6){1};

\node(vab) at (-1.8,6.2){6};
\node(vac) at (-1.4,6.2){8};
\node(vad) at (-1,6.2){3};
\node(vae) at (-0.6,6.2){21};
\node(vaf) at (-0.2,6.2){23};
\node(vag) at (0.2,6.2){3};

\node(vbb) at (-1.6,6.4){15};
\node(vbc) at (-1.2,6.4){4};
\node(vbd) at (-0.8,6.4){9};
\node(vbe) at (-0.4,6.4){51};
\node(vbf) at (-0,6.4){10};

\node(vcc) at (-1.4,6.6){7};
\node(vcd) at (-1,6.6){3};
\node(vce) at (-0.6,6.6){21};
\node(vcf) at (-0.2,6.6){22};

\node(vdc) at (-1.2,6.8){3};
\node(vdd) at (-0.8,6.8){5};
\node(vde) at (-0.4,6.8){9};

\node(ved) at (-1,7){3};
\node(vee) at (-0.6,7){2};

\node(vfd) at (-0.8,7.2){1};

\draw[-](va) edge (vab);
\draw[-](vb) edge (vac);
\draw[-](vc) edge (vad);
\draw[-](vd) edge (vae);
\draw[-](ve) edge (vaf);
\draw[-](vf) edge (vag);

\draw[-](vbb) edge (vac);
\draw[-](vbc) edge (vad);
\draw[-](vbd) edge (vae);
\draw[-](vbe) edge (vaf);
\draw[-](vbf) edge (vag);

\draw[-](vbb) edge (vcc);
\draw[-](vbc) edge (vcd);
\draw[-](vbd) edge (vce);
\draw[-](vbe) edge (vcf);

\draw[-](vab) edge (vb);
\draw[-](vac) edge (vc);
\draw[-](vad) edge (vd);
\draw[-](vae) edge (ve);
\draw[-](vaf) edge (vf);
\draw[-](vag) edge (vg);

\draw[-](vab) edge (vbb);
\draw[-](vac) edge (vbc);
\draw[-](vad) edge (vbd);
\draw[-](vae) edge (vbe);
\draw[-](vaf) edge (vbf);

\draw[-](vcc) edge (vbc);
\draw[-](vcd) edge (vbd);
\draw[-](vce) edge (vbe);
\draw[-](vcf) edge (vbf);

\draw[-](vcc) edge (vdc);
\draw[-](vcd) edge (vdd);
\draw[-](vce) edge (vde);

\draw[-](vdc) edge (ved);
\draw[-](vdd) edge (vee);

\draw[-](vdc) edge (vcd);
\draw[-](vdd) edge (vce);
\draw[-](vde) edge (vcf);

\draw[-](ved) edge (vdd);
\draw[-](vee) edge (vde);

\draw[-](ved) edge (vfd);

\draw[-](vfd) edge (vee);

\node(vxa) at (-2,8){1};
\node(vxb) at (-1.6,8){3};
\node(vxc) at (-1.2,8){9};
\node(vxd) at (-0.8,8){22};
\node(vxe) at (-0.4,8){15};
\node(vxf) at (-0,8){3};
\node(vxg) at (0.4,8){1};

\node(vxab) at (-1.8,8.2){3};
\node(vxac) at (-1.4,8.2){5};
\node(vxad) at (-1,8.2){51};
\node(vxae) at (-0.6,8.2){57};
\node(vxaf) at (-0.2,8.2){8};
\node(vxag) at (0.2,8.2){3};

\node(vxbb) at (-1.6,8.4){2};
\node(vxbc) at (-1.2,8.4){21};
\node(vxbd) at (-0.8,8.4){126};
\node(vxbe) at (-0.4,8.4){26};
\node(vxbf) at (-0,8.4){3};

\node(vxcc) at (-1.4,8.6){4};
\node(vxcd) at (-1,8.6){51};
\node(vxce) at (-0.6,8.6){57};
\node(vxcf) at (-0.2,8.6){7};

\node(vxdc) at (-1.2,8.8){9};
\node(vxdd) at (-0.8,8.8){23};
\node(vxde) at (-0.4,8.8){15};

\node(vxed) at (-1,9){4};
\node(vxee) at (-0.6,9){6};

\node(vxfd) at (-0.8,9.2){1};

\draw[-](vxa) edge (vxab);
\draw[-](vxb) edge (vxac);
\draw[-](vxc) edge (vxad);
\draw[-](vxd) edge (vxae);
\draw[-](vxe) edge (vxaf);
\draw[-](vxf) edge (vxag);

\draw[-](vxbb) edge (vxac);
\draw[-](vxbc) edge (vxad);
\draw[-](vxbd) edge (vxae);
\draw[-](vxbe) edge (vxaf);
\draw[-](vxbf) edge (vxag);

\draw[-](vxbb) edge (vxcc);
\draw[-](vxbc) edge (vxcd);
\draw[-](vxbd) edge (vxce);
\draw[-](vxbe) edge (vxcf);

\draw[-](vxab) edge (vxb);
\draw[-](vxac) edge (vxc);
\draw[-](vxad) edge (vxd);
\draw[-](vxae) edge (vxe);
\draw[-](vxaf) edge (vxf);
\draw[-](vxag) edge (vxg);

\draw[-](vxab) edge (vxbb);
\draw[-](vxac) edge (vxbc);
\draw[-](vxad) edge (vxbd);
\draw[-](vxae) edge (vxbe);
\draw[-](vxaf) edge (vxbf);

\draw[-](vxcc) edge (vxbc);
\draw[-](vxcd) edge (vxbd);
\draw[-](vxce) edge (vxbe);
\draw[-](vxcf) edge (vxbf);

\draw[-](vxcc) edge (vxdc);
\draw[-](vxcd) edge (vxdd);
\draw[-](vxce) edge (vxde);

\draw[-](vxdc) edge (vxed);
\draw[-](vxdd) edge (vxee);

\draw[-](vxdc) edge (vxcd);
\draw[-](vxdd) edge (vxce);
\draw[-](vxde) edge (vxcf);

\draw[-](vxed) edge (vxdd);
\draw[-](vxee) edge (vxde);

\draw[-](vxed) edge (vxfd);

\draw[-](vxfd) edge (vxee);

\node(vxxa) at (-2,10){1};
\node(vxxb) at (-1.6,10){4};
\node(vxxc) at (-1.2,10){10};
\node(vxxd) at (-0.8,10){7};
\node(vxxe) at (-0.4,10){2};
\node(vxxf) at (-0,10){2};
\node(vxxg) at (0.4,10){1};

\node(vxxab) at (-1.8,10.2){2};
\node(vxxac) at (-1.4,10.2){23};
\node(vxxad) at (-1,10.2){26};
\node(vxxae) at (-0.6,10.2){4};
\node(vxxaf) at (-0.2,10.2){5};
\node(vxxag) at (0.2,10.2){4};

\node(vxxbb) at (-1.6,10.4){9};
\node(vxxbc) at (-1.2,10.4){57};
\node(vxxbd) at (-0.8,10.4){12};
\node(vxxbe) at (-0.4,10.4){3};
\node(vxxbf) at (-0,10.4){9};

\node(vxxcc) at (-1.4,10.6){22};
\node(vxxcd) at (-1,10.6){26};
\node(vxxce) at (-0.6,10.6){4};
\node(vxxcf) at (-0.2,10.6){4};

\node(vxxdc) at (-1.2,10.8){10};
\node(vxxdd) at (-0.8,10.8){8};
\node(vxxde) at (-0.4,10.8){2};

\node(vxxed) at (-1,11){3};
\node(vxxee) at (-0.6,11){3};

\node(vxxfd) at (-0.8,11.2){1};

\draw[-](vxxa) edge (vxxab);
\draw[-](vxxb) edge (vxxac);
\draw[-](vxxc) edge (vxxad);
\draw[-](vxxd) edge (vxxae);
\draw[-](vxxe) edge (vxxaf);
\draw[-](vxxf) edge (vxxag);

\draw[-](vxxbb) edge (vxxac);
\draw[-](vxxbc) edge (vxxad);
\draw[-](vxxbd) edge (vxxae);
\draw[-](vxxbe) edge (vxxaf);
\draw[-](vxxbf) edge (vxxag);

\draw[-](vxxbb) edge (vxxcc);
\draw[-](vxxbc) edge (vxxcd);
\draw[-](vxxbd) edge (vxxce);
\draw[-](vxxbe) edge (vxxcf);

\draw[-](vxxab) edge (vxxb);
\draw[-](vxxac) edge (vxxc);
\draw[-](vxxad) edge (vxxd);
\draw[-](vxxae) edge (vxxe);
\draw[-](vxxaf) edge (vxxf);
\draw[-](vxxag) edge (vxxg);

\draw[-](vxxab) edge (vxxbb);
\draw[-](vxxac) edge (vxxbc);
\draw[-](vxxad) edge (vxxbd);
\draw[-](vxxae) edge (vxxbe);
\draw[-](vxxaf) edge (vxxbf);

\draw[-](vxxcc) edge (vxxbc);
\draw[-](vxxcd) edge (vxxbd);
\draw[-](vxxce) edge (vxxbe);
\draw[-](vxxcf) edge (vxxbf);

\draw[-](vxxcc) edge (vxxdc);
\draw[-](vxxcd) edge (vxxdd);
\draw[-](vxxce) edge (vxxde);

\draw[-](vxxdc) edge (vxxed);
\draw[-](vxxdd) edge (vxxee);

\draw[-](vxxdc) edge (vxxcd);
\draw[-](vxxdd) edge (vxxce);
\draw[-](vxxde) edge (vxxcf);

\draw[-](vxxed) edge (vxxdd);
\draw[-](vxxee) edge (vxxde);

\draw[-](vxxed) edge (vxxfd);

\draw[-](vxxfd) edge (vxxee);

\node(vva) at (-2,12){1};
\node(vvb) at (-1.6,12){3};
\node(vvc) at (-1.2,12){3};
\node(vvd) at (-0.8,12){4};
\node(vve) at (-0.4,12){9};
\node(vvf) at (-0,12){6};
\node(vvg) at (0.4,12){1};

\node(vvab) at (-1.8,12.2){6};
\node(vvac) at (-1.4,12.2){8};
\node(vvad) at (-1,12.2){3};
\node(vvae) at (-0.6,12.2){21};
\node(vvaf) at (-0.2,12.2){23};
\node(vvag) at (0.2,12.2){3};

\node(vvbb) at (-1.6,12.4){15};
\node(vvbc) at (-1.2,12.4){4};
\node(vvbd) at (-0.8,12.4){9};
\node(vvbe) at (-0.4,12.4){51};
\node(vvbf) at (-0,12.4){10};

\node(vvcc) at (-1.4,12.6){7};
\node(vvcd) at (-1,12.6){3};
\node(vvce) at (-0.6,12.6){21};
\node(vvcf) at (-0.2,12.6){22};

\node(vvdc) at (-1.2,12.8){3};
\node(vvdd) at (-0.8,12.8){5};
\node(vvde) at (-0.4,12.8){9};

\node(vved) at (-1,13){3};
\node(vvee) at (-0.6,13){2};

\node(vvfd) at (-0.8,13.2){1};

\draw[-](vva) edge (vvab);
\draw[-](vvb) edge (vvac);
\draw[-](vvc) edge (vvad);
\draw[-](vvd) edge (vvae);
\draw[-](vve) edge (vvaf);
\draw[-](vvf) edge (vvag);

\draw[-](vvbb) edge (vvac);
\draw[-](vvbc) edge (vvad);
\draw[-](vvbd) edge (vvae);
\draw[-](vvbe) edge (vvaf);
\draw[-](vvbf) edge (vvag);

\draw[-](vvbb) edge (vvcc);
\draw[-](vvbc) edge (vvcd);
\draw[-](vvbd) edge (vvce);
\draw[-](vvbe) edge (vvcf);

\draw[-](vvab) edge (vvb);
\draw[-](vvac) edge (vvc);
\draw[-](vvad) edge (vvd);
\draw[-](vvae) edge (vve);
\draw[-](vvaf) edge (vvf);
\draw[-](vvag) edge (vvg);

\draw[-](vvab) edge (vvbb);
\draw[-](vvac) edge (vvbc);
\draw[-](vvad) edge (vvbd);
\draw[-](vvae) edge (vvbe);
\draw[-](vvaf) edge (vvbf);

\draw[-](vvcc) edge (vvbc);
\draw[-](vvcd) edge (vvbd);
\draw[-](vvce) edge (vvbe);
\draw[-](vvcf) edge (vvbf);

\draw[-](vvcc) edge (vvdc);
\draw[-](vvcd) edge (vvdd);
\draw[-](vvce) edge (vvde);

\draw[-](vvdc) edge (vved);
\draw[-](vvdd) edge (vvee);

\draw[-](vvdc) edge (vvcd);
\draw[-](vvdd) edge (vvce);
\draw[-](vvde) edge (vvcf);

\draw[-](vved) edge (vvdd);
\draw[-](vvee) edge (vvde);

\draw[-](vved) edge (vvfd);

\draw[-](vvfd) edge (vvee);

\node(vvxa) at (-2,14){1};
\node(vvxb) at (-1.6,14){3};
\node(vvxc) at (-1.2,14){9};
\node(vvxd) at (-0.8,14){22};
\node(vvxe) at (-0.4,14){15};
\node(vvxf) at (-0,14){3};
\node(vvxg) at (0.4,14){1};

\node(vvxab) at (-1.8,14.2){3};
\node(vvxac) at (-1.4,14.2){5};
\node(vvxad) at (-1,14.2){51};
\node(vvxae) at (-0.6,14.2){57};
\node(vvxaf) at (-0.2,14.2){8};
\node(vvxag) at (0.2,14.2){3};

\node(vvxbb) at (-1.6,14.4){2};
\node(vvxbc) at (-1.2,14.4){21};
\node(vvxbd) at (-0.8,14.4){126};
\node(vvxbe) at (-0.4,14.4){26};
\node(vvxbf) at (-0,14.4){3};

\node(vvxcc) at (-1.4,14.6){4};
\node(vvxcd) at (-1,14.6){51};
\node(vvxce) at (-0.6,14.6){57};
\node(vvxcf) at (-0.2,14.6){7};

\node(vvxdc) at (-1.2,14.8){9};
\node(vvxdd) at (-0.8,14.8){23};
\node(vvxde) at (-0.4,14.8){15};

\node(vvxed) at (-1,15){4};
\node(vvxee) at (-0.6,15){6};

\node(vvxfd) at (-0.8,15.2){1};

\draw[-](vvxa) edge (vvxab);
\draw[-](vvxb) edge (vvxac);
\draw[-](vvxc) edge (vvxad);
\draw[-](vvxd) edge (vvxae);
\draw[-](vvxe) edge (vvxaf);
\draw[-](vvxf) edge (vvxag);

\draw[-](vvxbb) edge (vvxac);
\draw[-](vvxbc) edge (vvxad);
\draw[-](vvxbd) edge (vvxae);
\draw[-](vvxbe) edge (vvxaf);
\draw[-](vvxbf) edge (vvxag);

\draw[-](vvxbb) edge (vvxcc);
\draw[-](vvxbc) edge (vvxcd);
\draw[-](vvxbd) edge (vvxce);
\draw[-](vvxbe) edge (vvxcf);

\draw[-](vvxab) edge (vvxb);
\draw[-](vvxac) edge (vvxc);
\draw[-](vvxad) edge (vvxd);
\draw[-](vvxae) edge (vvxe);
\draw[-](vvxaf) edge (vvxf);
\draw[-](vvxag) edge (vvxg);

\draw[-](vvxab) edge (vvxbb);
\draw[-](vvxac) edge (vvxbc);
\draw[-](vvxad) edge (vvxbd);
\draw[-](vvxae) edge (vvxbe);
\draw[-](vvxaf) edge (vvxbf);

\draw[-](vvxcc) edge (vvxbc);
\draw[-](vvxcd) edge (vvxbd);
\draw[-](vvxce) edge (vvxbe);
\draw[-](vvxcf) edge (vvxbf);

\draw[-](vvxcc) edge (vvxdc);
\draw[-](vvxcd) edge (vvxdd);
\draw[-](vvxce) edge (vvxde);

\draw[-](vvxdc) edge (vvxed);
\draw[-](vvxdd) edge (vvxee);

\draw[-](vvxdc) edge (vvxcd);
\draw[-](vvxdd) edge (vvxce);
\draw[-](vvxde) edge (vvxcf);

\draw[-](vvxed) edge (vvxdd);
\draw[-](vvxee) edge (vvxde);

\draw[-](vvxed) edge (vvxfd);

\draw[-](vvxfd) edge (vvxee);

\node(vvxxa) at (-2,16){1};
\node(vvxxb) at (-1.6,16){4};
\node(vvxxc) at (-1.2,16){10};
\node(vvxxd) at (-0.8,16){7};
\node(vvxxe) at (-0.4,16){2};
\node(vvxxf) at (-0,16){2};
\node(vvxxg) at (0.4,16){1};

\node(vvxxab) at (-1.8,16.2){2};
\node(vvxxac) at (-1.4,16.2){23};
\node(vvxxad) at (-1,16.2){26};
\node(vvxxae) at (-0.6,16.2){4};
\node(vvxxaf) at (-0.2,16.2){5};
\node(vvxxag) at (0.2,16.2){4};

\node(vvxxbb) at (-1.6,16.4){9};
\node(vvxxbc) at (-1.2,16.4){57};
\node(vvxxbd) at (-0.8,16.4){12};
\node(vvxxbe) at (-0.4,16.4){3};
\node(vvxxbf) at (-0,16.4){9};

\node(vvxxcc) at (-1.4,16.6){22};
\node(vvxxcd) at (-1,16.6){26};
\node(vvxxce) at (-0.6,16.6){4};
\node(vvxxcf) at (-0.2,16.6){4};

\node(vvxxdc) at (-1.2,16.8){10};
\node(vvxxdd) at (-0.8,16.8){8};
\node(vvxxde) at (-0.4,16.8){2};

\node(vvxxed) at (-1,17){3};
\node(vvxxee) at (-0.6,17){3};

\node(vvxxfd) at (-0.8,17.2){1};

\draw[-](vvxxa) edge (vvxxab);
\draw[-](vvxxb) edge (vvxxac);
\draw[-](vvxxc) edge (vvxxad);
\draw[-](vvxxd) edge (vvxxae);
\draw[-](vvxxe) edge (vvxxaf);
\draw[-](vvxxf) edge (vvxxag);

\draw[-](vvxxbb) edge (vvxxac);
\draw[-](vvxxbc) edge (vvxxad);
\draw[-](vvxxbd) edge (vvxxae);
\draw[-](vvxxbe) edge (vvxxaf);
\draw[-](vvxxbf) edge (vvxxag);

\draw[-](vvxxbb) edge (vvxxcc);
\draw[-](vvxxbc) edge (vvxxcd);
\draw[-](vvxxbd) edge (vvxxce);
\draw[-](vvxxbe) edge (vvxxcf);

\draw[-](vvxxab) edge (vvxxb);
\draw[-](vvxxac) edge (vvxxc);
\draw[-](vvxxad) edge (vvxxd);
\draw[-](vvxxae) edge (vvxxe);
\draw[-](vvxxaf) edge (vvxxf);
\draw[-](vvxxag) edge (vvxxg);

\draw[-](vvxxab) edge (vvxxbb);
\draw[-](vvxxac) edge (vvxxbc);
\draw[-](vvxxad) edge (vvxxbd);
\draw[-](vvxxae) edge (vvxxbe);
\draw[-](vvxxaf) edge (vvxxbf);

\draw[-](vvxxcc) edge (vvxxbc);
\draw[-](vvxxcd) edge (vvxxbd);
\draw[-](vvxxce) edge (vvxxbe);
\draw[-](vvxxcf) edge (vvxxbf);

\draw[-](vvxxcc) edge (vvxxdc);
\draw[-](vvxxcd) edge (vvxxdd);
\draw[-](vvxxce) edge (vvxxde);

\draw[-](vvxxdc) edge (vvxxed);
\draw[-](vvxxdd) edge (vvxxee);

\draw[-](vvxxdc) edge (vvxxcd);
\draw[-](vvxxdd) edge (vvxxce);
\draw[-](vvxxde) edge (vvxxcf);

\draw[-](vvxxed) edge (vvxxdd);
\draw[-](vvxxee) edge (vvxxde);

\draw[-](vvxxed) edge (vvxxfd);

\draw[-](vvxxfd) edge (vvxxee);

\end{tikzpicture}\end{figure}

\begin{prop}\label{gyem}
The underlying graph of the fundamental domain of a $(k,n)$-frieze pattern has the same underlying graph as $A_{n-k+1}^{k-1}$.
\end{prop}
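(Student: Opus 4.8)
The plan is to construct an explicit isomorphism of graphs between the underlying graph of the fundamental domain, whose vertices are the $k$-subsets $\mathbf{S}$ of $\{1,\ldots,n\}$, and the underlying graph of $A_{n-k+1}^{k-1}$, whose vertices form the set $(Q^{(k-1)}_{n-k+1})_0$ described above (here $d=n-k+1$ and $m+1=k$). To a $k$-subset $I=\{i_1<i_2<\cdots<i_k\}$ I will attach the tuple $\phi(I)=(l_1,\ldots,l_k)$ with
$$l_1=i_2-i_1,\qquad l_j=i_{j+1}-i_j-1\ \text{ for }2\le j\le k-1,\qquad l_k=n-i_k;$$
that is, $l_1$ is the distance between the two smallest elements of $I$ and $l_2,\ldots,l_k$ are the successive gaps, reading $i_{k+1}$ as $n+1$. (When $k=2$ this reduces to $\phi(I)=(i_2-i_1,\,n-i_2)$, which recovers the classical identification of a Coxeter frieze's fundamental domain with the Auslander quiver of $A_{n-1}$.)

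First I would verify that $\phi$ is a well-defined bijection. From $i_2>i_1$ one gets $l_1\ge 1$, the other $l_j$ are obviously $\ge 0$, and $\sum_j l_j$ telescopes to $n-i_1-(k-2)\le n-k+1$, so $\phi(I)$ is indeed a vertex of $A_{n-k+1}^{k-1}$. An inverse is written down directly: from a vertex $(l_1,\ldots,l_k)$ put $i_1=n-k+2-\sum_j l_j\ (\ge 1)$, then $i_2=i_1+l_1$ and $i_{j+1}=i_j+l_j+1$, and one checks $1\le i_1<\cdots<i_k=n-l_k\le n$. (Alternatively one can observe that both vertex sets have cardinality $\binom{n}{k}$ --- on the quiver side, by adjoining the slack coordinate $n-k+1-\sum_j l_j$ and counting compositions --- and that $\phi$ is injective.)

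The main work is to show $\phi$ is edge-preserving in both directions. An edge of the fundamental-domain graph joins $I$ to the set $I^{(r)}$ obtained by replacing $i_r$ by $i_r+1$, and such an edge exists exactly when $i_r+1<i_{r+1}$ (with the convention $i_{k+1}=n+1$), i.e.\ when $I^{(r)}$ is still a $k$-subset. I would then compute the effect of this move on $\phi$: a short telescoping check shows $\phi(I^{(1)})=\phi(I)+v_1$ (only $l_1$ changes, decreasing by $1$, while the slack coordinate increases by $1$), and $\phi(I^{(r)})=\phi(I)+v_r$ for $2\le r\le k$ ($l_{r-1}$ increases by $1$ and $l_r$ decreases by $1$). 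Crucially, the condition that $I^{(r)}$ be a $k$-subset translates precisely to the condition that $\phi(I)+v_r$ be a vertex of $A_{n-k+1}^{k-1}$ (namely $l_1\ge 2$ when $r=1$, and $l_r\ge 1$ when $2\le r\le k$). Since every edge of the underlying graph of $A_{n-k+1}^{k-1}$ has the form $\{\underline l,\underline l+v_i\}$ and every edge of the fundamental domain has the form $\{I,I^{(r)}\}$, this establishes the isomorphism.

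I expect the main obstacle to be pinning down the right normalisation and handling the two extremal arrows. The seemingly more natural assignment, in which $l_1$ records the number of elements of $\{1,\ldots,n\}$ below $i_1$, fails: under it the arrow $\underline l\mapsto\underline l\pm e_1$ corresponds to translating all of $I$ by $1$, which is not an edge of the fundamental domain. Once one commits to the shifted normalisation above, the $v_1$-arrow moves $i_1$ and the $v_k$-arrow moves $i_k$ against the boundary value $n$; checking that the validity of these two boundary arrows matches the subset-side conditions $i_1-1\ge 1$ and $i_k+1\le n$ is the only slightly fiddly part, with everything else reducing to routine bookkeeping.
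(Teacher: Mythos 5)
Your proposal is correct and is essentially the paper's own argument: both proofs identify $k$-subsets with gap vectors $(l_1,\dots,l_k)$ satisfying $l_1\ge 1$, $l_j\ge 0$, $\sum l_j\le n-k+1$, and then match the ``replace $i_r$ by $i_r+1$'' edges with the moves $\underline l\mapsto\underline l\pm v_r$, checking that the validity conditions agree on both sides. The only differences are cosmetic --- you go from subsets to tuples while the paper writes the inverse map via partial sums, and your normalisation is the paper's composed with the reflection $i\mapsto n+1-i$ --- and you spell out the edge verification that the paper leaves as ``straightforward to check.''
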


\begin{proof}
The fundamental domain of $(k,n)$-frieze patterns consists of the $k$-subsets of $\{1,2,\cdots, n\}$. The proof follows from the observation that the set of vertex labels of a $A_{n-k+1}^{k-1}$ is in bijection with the $k$-subsets of $\{1,2,\cdots,n\}$ under the map $$\phi:(l_1,l_2,\cdots l_{k})\mapsto \{l_{k}+1,l_{k}+l_{k-1}+2, \cdots, l_{k}+l_{k-1}+\cdots +l_2+k-1,l_{k}+l_{k-1}+\cdots +l_1+k-1\}.$$ The condition 
$l_1+l_2+\cdots +l_{k}\leq n-k+1$ means $$l_{k}+l_{k-1}+\cdots +l_1+k-1\leq n.$$
So the map sends each vertex of $A_{n-k+1}^{k-1}$ to a $k$-subset of $\{1,2,\cdots, n\}$ (each element of the subset must be distinct as each $l_1\geq 1$). Conversely, every $k$-subset  $I\subset\{1,2,\cdots, m\}$, where $I=\{x_1,x_2,\cdots,x_{k}\}$, defines a unique vertex of $A_{n-k+1}^{k-1}$ by setting $l_{k}=x_1-1$, $l_1=x_{k}-x_{k-1}$ and $l_{k-i}=x_{i+1}-x_{i}-1$ for $2\leq i\leq k-1$. 

We further claim that the edges in the underlying graph of the fundamental domain of a $(k,n)$-frieze pattern coincide with the arrows of $A_{n-k+1}^{k-1}$. Recall that there is an arrow from $\underline{l}$ to $\underline{m}$ in $A_{n-k+1}^{k-1}$ if $\underline{l}=\underline{m}
+v_i$ for $1\leq i\leq k$. 
it is straightforward to check that $\phi(\underline{m})\setminus\{i\}= \phi(\underline{l})\setminus\{i+1\}$ if and only if $\underline{l}=\underline{m}
+v_{k-i}$.
\end{proof}

Proposition \ref{gyem} can be rephrased to say that the underlying graph of the fundamental domain of a $(k,n)$-frieze pattern has the underlying graph of a \emph{higher Auslander algebra} when $k>2$. It is for this reason that we call a $(k,n)$-frieze pattern for $k>2$ a \emph{higher frieze pattern}.

\begin{rem}
By definition, the generalised diamond relations in a $(3,n)$-frieze pattern are in bijection with a set of generating relations for the $(3,n)$-Pl\"ucker relations. It follows that we may combinatorially define a $(3,n)$-frieze pattern as a map from the cylinder of the Auslander algebra of an $A_{n-2}$ quiver to the positive integers that satisfies the generalised diamond relations. 
\end{rem}

\section{Geometric Frieze Patterns}

\subsection{Connection to $\mathrm{SL}_k$-Frieze Patterns}

It has been observed in \cite[Section 3.2]{mgost}, see also \cite[Section 3.4]{mg}, that any $\mathrm{SL}_k$-frieze of width $n-k-1$ determines a point on the Grassmannian $\mathrm{Gr}(k,n)$. This observation reveals the connection between $\mathrm{SL}_k$-frieze patterns and $(k,n)$-frieze patterns.

Firstly, any $(k,n)$-frieze pattern determines an $\mathrm{SL}_k$-frieze pattern of width $n-k-1$. 
Given a $(k,n)$-frieze pattern $\mathcal{P}$, we obtain an $\mathrm{SL}_k$-frieze pattern by taking as the $i^\mathrm{th}$-column in the array to be $$\{p_{i(i+1)\cdots (i+k-2)(i+k-1)},p_{i(i+1)\cdots (i+k-2)(i+k)},\cdots,p_{i(i+1)\cdots (i+k-2)(i-1)}\}$$
This has width $n-k-1$. Any $k\times k$-matrix in the array is of the following form: 
$$M_{ij}:=
  \begin{bmatrix}
    p_{i(i+1)\cdots (i+k-2)j} & p_{i(i+1)\cdots (i+k-2)(j+1)} &\cdots & p_{i(i+1)\cdots (i+k-2)(j+k-1)} \\
    p_{(i+1)(i+2)\cdots (i+k-1)j}& p_{(i+1)(i+2)\cdots (i+k-1)(j+1)} &\cdots &p_{(i+1)(i+2)\cdots (i+k-1)(j+k-1)}  \\
    \cdots&&&\cdots\\
     p_{(i+k-1)(i+k)\cdots (i+2k-3)j}&  p_{(i+k-1)(i+k)\cdots (i+2k-3)(j+1)} &\cdots & p_{(i+k-1)(i+k)\cdots (i+2k-3)(j+k-1)}
  \end{bmatrix}$$
It has been shown by \cite{bfgst} that $\mathrm{det}(M_{ij})=1.$ Conversely, a $(k,n)$-frieze pattern is obtained from its induced $\mathrm{SL}_k$-frieze pattern in the following fashion. Let 
$$\begin{tikzpicture}[xscale=6, yscale=3]
\node(c) at (0.8,0){$1$};
\node(d) at (1.2,0){$1$};
\node(e) at (1.6,0){$1$};
\node(e) at (2,0){$1$};
\node(qc) at (0.2,0.4){$\cdots$};
\node(ob) at (0.6,0.4){$d_{i+n-k-1,j+1}$};
\node(oc) at (1,0.4){$d_{i+n-k,j+2}$};
\node(od) at (1.4,0.4){$\cdots$};
\node(oe) at (1.8,0.4){$d_{i+n-2,j+k}$};
\node(pb) at (0.4,0.8){$\cdots$};
\node(pc) at (0.8,0.8){$\cdots$};
\node(pe) at (1.6,0.8){$\cdots$};
\node(qa) at (0.2,1.2){$d_{i+1,j+1}$};
\node(qb) at (0.6,1.2){$d_{i+2,j+2}$};
\node(qc) at (1,1.2){$\cdots$};
\node(qd) at (1.4,1.2){$d_{i+k,j+k}$};
\node(qc) at (1.8,1.2){$\cdots$};
\node(qj) at (3.8,1.2){$1$};
\node(ra) at (0,1.6){$1$};
\node(rb) at (0.4,1.6){$1$};
\node(rc) at (0.8,1.6){$1$};
\node(rd) at (1.2,1.6){$1$};
\end{tikzpicture}$$
be part of an $\mathrm{SL}_k$-frieze of width $n-k-1$ that was determined by a $(k,n)$-frieze pattern. Then the $k\times n$-matrix

$$\begin{bmatrix}
1&d_{i+1,j+1}&d_{i+2,j+1}&\cdots&d_{i+n-k-1,j+1}&1&0&\cdots& 0&0\\
0&1&d_{i+2,j+2}&\cdots&d_{i+n-k-1,j+2}&d_{i+n-k-1}&1&0&\cdots&0\\
\cdots&&&&&&&&&\cdots\\
0&\cdots&0&1&d_{i+k,j+k}&\cdots&\cdots&\cdots &d_{i+n-2,j+k}&1
\end{bmatrix}$$
is a (non-unique) point on the Grassmannian $\mathrm{Gr}(k,n)$. Label the columns sequentially with the leftmost column labelled by $1$ and the rightmost column labelled $n$. Then the $(k,n)$-frieze pattern is obtained by setting $p_I$ to be the $k\times k$-minor based on columns with indices in $I$. For more information, see the surveys \cite{lam}, \cite{mg}. 

\subsection{Positive Grassmannian}

Two $k$-subsets $I$ and $J$ of $\{1,2,\cdots, n\}$ are said to be \emph{non-crossing} (sometimes referred to as \emph{weakly separated}, see for example \cite{fp}, \cite{ops}) if there do not exist distinct elements $s<t<u<v$ (ordered modulo $n$) where $s,u\in I\setminus J$ and $t,v\in J\setminus I$. A \emph{cluster of Pl\"ucker coordinates in $\mathcal{A}_{k,n}$} is a maximal collection of pairwise non-crossing $k$-subsets of $\{1,2,\cdots,n\}$. It was proven in \cite{dkk}, \cite{ops} that every cluster of Pl\"ucker coordinates in the cluster structure of $\mathrm{Gr}(k,n)$ has $(k-1)(n-k-1)+n$ members.

A $(k,n)$-frieze pattern determines a point on the Grassmannian. In particular, by definition it determines a point on the Grassmannian such that each Pl\"ucker coordinate has a positive value. This means that a $(k,n)$-frieze pattern determines a point on the \emph{positive Grassmannian}, as defined in \cite{post}. For a $(k,n)$-frieze pattern, an \emph{arrangement of smallest minors in $\mathrm{Gr}^+(k,n)$}, as defined in \cite{fp}, is a collection of Pl\"ucker coordinates $\mathcal{J}$ such that $p_I=1$ for all $I\in \mathcal{J}$.  

\begin{theorem}\cite[Theorem 5.6]{fp}\label{fapo}
For $k\leq 3$, a collection of $k$-subests of $\{1,2,\cdots,n\}$ is an arrangement of smallest minors in $\mathrm{Gr}^+(k,n)$ if and only if it is a collection of pairwise non-crossing $k$-subsets of $\{1,2,\cdots,n\}$.
\end{theorem}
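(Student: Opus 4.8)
The plan is to establish the two implications separately; I expect ``arrangement of smallest minors $\Rightarrow$ pairwise non-crossing'' to hold for every $k$, with the hypothesis $k\le 3$ entering only in the converse.

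For the forward direction, fix $A\in\mathrm{Gr}^+(k,n)$ and let $\mu>0$ be the smallest value attained by a Plücker coordinate at $A$, so that $\mathcal J=\{I:p_I=\mu\}$ while $p_L\ge\mu$ for every $k$-subset $L$. Suppose, for contradiction, that $I,J\in\mathcal J$ are crossing. Because $I$ and $J$ cross, there is a Plücker relation expressing $p_Ip_J$ as a subtraction-free sum $\sum_m p_{I_m}p_{J_m}$ with at least two summands: when $|I\cap J|=k-2$ this is the three-term relation $p_{Sac}p_{Sbd}=p_{Sab}p_{Scd}+p_{Sad}p_{Sbc}$ applied to $S=I\cap J$ and the four witnessing indices $a<b<c<d$, and in general it is a determinantal identity valid on $\mathrm{Gr}^+(k,n)$ (existence of such a relation is one of the standard characterisations of a crossing pair; see the theory of weakly separated sets). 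Then $\mu^2=p_Ip_J=\sum_m p_{I_m}p_{J_m}\ge 2\mu^2$, which is impossible since $\mu>0$. Hence $\mathcal J$ is pairwise non-crossing.

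For the converse, begin with a pairwise non-crossing collection $\mathcal J$ and enlarge it to a maximal such collection $\widehat{\mathcal J}$; by \cite{dkk},\cite{ops} this has $(k-1)(n-k-1)+n$ members and is the set of Plücker coordinates of a seed in the cluster structure of $\mathbb C[\mathrm{Gr}(k,n)]$. One then specifies a point $A$ by prescribing those cluster variables, say $p_I=1$ for $I\in\mathcal J$ and $p_I=N$ for the remaining elements of $\widehat{\mathcal J}$, with $N\gg 1$. By the Laurent phenomenon together with positivity for cluster algebras of Grassmannian type, every other Plücker coordinate $p_L$ is a subtraction-free Laurent polynomial in these variables, so it evaluates to a positive number and $A$ lies in $\mathrm{Gr}^+(k,n)$; moreover the value $1$ is attained on all of $\mathcal J$. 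The crux is to show it is attained \emph{only} there, i.e.\ that $p_L>1$ for every $L\notin\mathcal J$. Equivalently, one wants the tropical Plücker vector vanishing exactly on $\mathcal J$ to span a genuine face of the positive Dressian and to lift, with equal leading coefficients along $\mathcal J$, to a point of $\mathrm{Gr}^+(k,n)$.

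This last step is the main obstacle, and it is where $k\le 3$ is used. I would dispatch $k=2$ by hand: a non-crossing family of chords extends to a triangulation, which is realised by an explicit totally positive matrix (scaling the chosen diagonal minors). For $k=3$ one leans on the comparatively tractable combinatorics of maximal weakly separated collections for $\mathrm{Gr}(3,n)$ (plabic tilings) to rule out any non-cluster minor dropping to $1$ and to equalise leading coefficients; the analogous control of the remaining Plücker coordinates fails for $k\ge 4$, which is precisely why the statement is restricted to $k\le 3$.
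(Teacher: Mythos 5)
A preliminary remark: the paper offers no proof of this statement at all --- it is quoted from Farber--Postnikov \cite[Theorem 5.6]{fp} and used as a black box in the proof of Theorem \ref{main} --- so your attempt can only be judged against the argument in \cite{fp}, and by that standard it has genuine gaps in both directions. In the forward direction, your argument is complete only when $|I\cap J|=k-2$, where the three-term Pl\"ucker relation is indeed subtraction-free (modulo some care with the cyclic versus linear ordering). For a general crossing pair there is no ``standard'' subtraction-free Pl\"ucker identity expressing $p_Ip_J$ as a sum of two or more products of Pl\"ucker coordinates, and the existence of such an identity is not a characterisation of crossing. Already for $k=3$ a crossing pair can be disjoint, e.g.\ $I=\{1,3,6\}$, $J=\{2,4,5\}$ in $\mathrm{Gr}(3,6)$: each of the three basic four-term relations containing $p_{136}p_{245}$ carries a minus sign (for instance $p_{136}p_{245}=p_{126}p_{345}+p_{146}p_{235}-p_{156}p_{234}$), so your estimate $\mu^2\geq 2\mu^2$ does not follow. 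Handling exactly these pairs is why Farber and Postnikov bring in Skandera-type inequalities coming from Temperley--Lieb immanant positivity; without that input the forward implication is unproven even in the range $k\leq 3$ that the theorem asserts.

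In the converse direction you have described a plausible construction but explicitly left the crux unproved: one must produce a totally positive point whose minimum is attained exactly on $\mathcal{J}$, i.e.\ with $p_L$ strictly larger than the common value for every $k$-subset $L\notin\mathcal{J}$, and ``leaning on plabic tilings'' is a placeholder, not an argument --- this is precisely where the content of \cite[Theorem 5.6]{fp} and the restriction $k\leq 3$ live (the converse fails for $k\geq 4$, cf.\ Remark 12.12 of \cite{fp}). Your specific choice of point is also not obviously adequate: after extending $\mathcal{J}$ to a maximal non-crossing collection and setting the members of $\mathcal{J}$ to $1$ and the remaining cluster variables to $N\gg 1$, the non-cluster Pl\"ucker coordinates are positive Laurent polynomials whose denominators may involve the large variables, so some of them could evaluate below $1$; controlling all $\binom{n}{k}$ coordinates simultaneously is the hard step, and it is not addressed. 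As written, both implications rest on unsupported claims, so the proposal does not constitute a proof of the cited theorem.
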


This description does not hold in general, see Remark 12.12 of \cite{fp}.
\subsection{Main Result}
We now turn our attention towards cluster algebras. 

\begin{defn}
A $(k,n)$-frieze pattern is \emph{geometric} if the collection of $k$-subsets $I\subset\{1,2,\cdots,n\}$ that satisfy $p_I=1$ forms a cluster in $\mathcal{A}_{k,n}$. 
\end{defn}

Not every frieze pattern is geometric - to the point that many $(k,n)$-frieze patterns do not contain a non-consecutive subset $I$ with $p_I=1$. For an example of a $(3,9)$ frieze pattern where this happens, see Figure \ref{ng2}.

\begin{theorem}\label{main}
For $2\leq k\leq n/2$, a cluster in $\mathcal{A}_{k,n}$ determines a unique, geometric $(k,n)$-frieze pattern. 
If further $k\leq 3$, this restricts to a bijection between geometric $(k,n)$-frieze patterns and clusters in $\mathcal{A}_{k,n}$. 
\end{theorem}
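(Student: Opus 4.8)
The plan is to build the correspondence $\mathcal{C}\mapsto\mathcal{P}_{\mathcal{C}}$ through the totally positive Grassmannian, and then to upgrade it to a bijection when $k\le 3$. Write $\widehat{\mathrm{Gr}}(k,n)=\mathrm{Spec}\,\mathbb{C}[\mathrm{Gr}(k,n)]$ for the affine cone. Fix a cluster $\mathcal{C}\in\mathcal{A}_{k,n}$; by definition it is a maximal collection of pairwise non-crossing $k$-subsets of $\{1,\dots,n\}$, it contains all $n$ interval subsets (the frozen variables), it is the cluster of a seed of the cluster structure on $\widehat{\mathrm{Gr}}(k,n)$, and it has $|\mathcal{C}|=(k-1)(n-k-1)+n=k(n-k)+1=\dim\widehat{\mathrm{Gr}}(k,n)$ elements. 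Consequently the Plücker coordinates indexed by $\mathcal{C}$ form a coordinate system on the dense open \emph{$\mathcal{C}$-chart} $\{\,p_J\neq 0 \text{ for all } J\in\mathcal{C}\,\}\subseteq\widehat{\mathrm{Gr}}(k,n)$; in particular, by the Laurent phenomenon, every Plücker coordinate restricts there to a Laurent polynomial with integer coefficients in the $\mathcal{C}$-coordinates, and a point of the chart is determined by its $\mathcal{C}$-coordinates. Let $V_{\mathcal{C}}$ be the unique point of the $\mathcal{C}$-chart at which every $\mathcal{C}$-coordinate equals $1$, and set $\mathcal{P}_{\mathcal{C}}(I):=p_I(V_{\mathcal{C}})$ for each $k$-multiset $I$ (so $\mathcal{P}_{\mathcal{C}}(I)=0$ when $I$ has a repeated entry, by \ref{eq1}).

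First I would verify that $\mathcal{P}_{\mathcal{C}}$ is a $(k,n)$-frieze pattern. Since all $\mathcal{C}$-coordinates of $V_{\mathcal{C}}$ are positive, $V_{\mathcal{C}}$ is a totally positive point, so $p_I(V_{\mathcal{C}})>0$ for every $k$-subset $I$; together with the integrality of the Laurent expansions this gives $\mathcal{P}_{\mathcal{C}}(I)\in\mathbb{Z}_{\ge 1}$. The interval subsets are frozen, hence belong to $\mathcal{C}$, so $\mathcal{P}_{\mathcal{C}}$ sends each of them to $1$. The $(k,n)$-Plücker relations hold automatically, $V_{\mathcal{C}}$ being an honest point of $\mathrm{Gr}(k,n)$. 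Uniqueness of the frieze determined by $\mathcal{C}$ is then clear: $V_{\mathcal{C}}$ is uniquely determined by $\mathcal{C}$, and a point of the Grassmannian is determined by its Plücker coordinates.

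The substantive step is that $\mathcal{P}_{\mathcal{C}}$ is \emph{geometric}, i.e.\ that $\{I:\mathcal{P}_{\mathcal{C}}(I)=1\}=\mathcal{C}$; the inclusion $\supseteq$ is immediate from the construction. For $\subseteq$: all minors of $V_{\mathcal{C}}$ are positive integers with minimum $1$, so $\{I:p_I(V_{\mathcal{C}})=1\}$ is precisely the arrangement of smallest minors of $V_{\mathcal{C}}$ in $\mathrm{Gr}^{+}(k,n)$; hence, for $k\le 3$, Theorem \ref{fapo} shows it is a collection of pairwise non-crossing $k$-subsets, and since it contains the maximal non-crossing collection $\mathcal{C}$ it must equal $\mathcal{C}$. (For general $k$ one can instead observe that if $I\notin\mathcal{C}$ then $p_I$ is a cluster variable outside the seed $\mathcal{C}$, so its Laurent expansion in the $\mathcal{C}$-coordinates is not a single monomial; by the positivity theorem for cluster algebras from quivers its coefficients are non-negative integers, so its value $\mathcal{P}_{\mathcal{C}}(I)$ at the all-ones point $V_{\mathcal{C}}$ is at least $2$.) This proves the first assertion for all $2\le k\le n/2$, and --- since $\mathcal{C}$ is recovered from $\mathcal{P}_{\mathcal{C}}$ as $\{I:\mathcal{P}_{\mathcal{C}}(I)=1\}$ --- shows that $\mathcal{C}\mapsto\mathcal{P}_{\mathcal{C}}$ is injective.

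For the bijection when $k\le 3$ it remains to prove surjectivity. Given a geometric $(k,n)$-frieze $\mathcal{P}$, put $\mathcal{C}:=\{I:p_I=1\}$, a cluster by the definition of geometric. The values $\{p_I\}$ satisfy the $(k,n)$-Plücker relations and are not all zero, so they are the Plücker coordinates of a point $V\in\widehat{\mathrm{Gr}}(k,n)$; since $p_I(V)=1\neq 0$ for all $I\in\mathcal{C}$, the point $V$ lies in the $\mathcal{C}$-chart, on which a point is determined by its $\mathcal{C}$-coordinates. As $V$ and $V_{\mathcal{C}}$ have the same $\mathcal{C}$-coordinates --- all equal to $1$ --- we get $V=V_{\mathcal{C}}$ and therefore $\mathcal{P}=\mathcal{P}_{\mathcal{C}}$, so $\mathcal{P}$ lies in the image. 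The main obstacle throughout is the identification $\{I:p_I=1\}=\mathcal{C}$ rather than merely $\supseteq\mathcal{C}$: it is here that Theorem \ref{fapo}, and hence the hypothesis $k\le 3$, enters (or, for general $k$, the positivity theorem); everything else is bookkeeping with the cluster chart and the Laurent phenomenon.
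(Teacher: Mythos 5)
Your construction is essentially the paper's own: given a cluster $\mathcal{C}$, the paper expands each Pl\"ucker coordinate as a Laurent polynomial $f_I$ in the cluster variables and specialises all of them to $1$, which is exactly your evaluation at the all-ones point $V_{\mathcal{C}}$ of the cluster chart; and both arguments identify $\{I : p_I=1\}$ with $\mathcal{C}$ for $k\le 3$ by combining Theorem \ref{fapo} with maximality of pairwise non-crossing collections, the inverse map being $\mathcal{P}\mapsto\{I : p_I=1\}$ in both cases. Your handling of uniqueness, injectivity and surjectivity (recovering $V=V_{\mathcal{C}}$ from a geometric frieze via the torus chart and the Laurent phenomenon) is a more explicit version of bookkeeping the paper leaves implicit, as is your justification that the values are positive integers. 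The one place you go beyond the paper is the parenthetical argument for geometricity at general $k$ via the positivity theorem: there you assert without proof that the Laurent expansion of $p_I$ for $I\notin\mathcal{C}$ is never a single Laurent monomial, and that sub-claim does need an argument (for instance via the proper Laurent monomial property coming from categorification, or via factoriality of the homogeneous coordinate ring together with irreducibility and degree-one homogeneity of Pl\"ucker coordinates). Note, however, that the paper's own proof is silent about $k\ge 4$ altogether --- it treats only $k=2$ (citing the classical case) and $k=3$ --- so on the cases the paper actually proves your argument matches it, and elsewhere you are attempting strictly more.
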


\begin{proof}

When $k=2$, this is well known, see for example \cite[Section 1.5]{mg}.
In the case $k=3$, we will actually prove that there is a bijection between maximal collections of pairwise non-crossing $k$ subsets of $\{1,2,\cdots,n\}$, $\mathcal{C}$, and geometric $(k,n)$-frieze patterns, $\mathcal{P}$. 

Given a $(3,n)$-frieze pattern $\mathcal{P}$, and any two $3$-subsets $I$ and $J$ with $p_I=p_J=1$, then Theorem \ref{fapo} implies that $I$ and $J$ are non-crossing.
% Further, Theorem 7.1 in \cite{ops} states that a set of non-consecutive, pairwise non-crossing $k$-subsets of $\{1,2,\cdots,n\}$ is maximal if any only if it has $(k-1)(n-k-1)$ elements. S
So each geometric $(3,n)$-frieze pattern $\mathcal{P}$ determines a maximal collection of pairwise non-crossing $3$-subsets of $\{1,2,\cdots,n\}$, $\mathcal{C}=\{I|p_I=1\}$.

We are left to show that any maximal set of pairwise non-crossing $k$-subsets of $\{1,2,\cdots,n\}$, $\mathcal {C}$ generates a geometric $(k,n)$-frieze pattern, $\mathcal{P}$. Such a maximal set of pairwise non-crossing $k$-subsets of $\{1,2,\cdots,n\}$ determines a cluster in $\mathcal{A}_{k,n}$. In other words, a unique Laurent polynomial $f_I(\underline{x})$ over the indeterminates $\underline{x}=(x_i|i\in \mathcal {C})$ is associated to each $k$-subset $I\subset\{1,2,\cdots,n\}$. Moreover we must have that $p_I=f_I(\underline{x})$ for any choice of values of the $x_i$. Simply set $x_i=1$ for all $i\in \mathcal {C}$ and it is now a consequence of Theorem \ref{fapo} that this determines a geometric $(k,n)$-frieze pattern. 
\end{proof}

\begin{figure}
\caption{A geometric $(3,6)$-frieze pattern expressed via its Laurent polynomials. The frieze pattern on the left of Figure \ref{geo} can be obtained by setting $x_1=x_2=x_3=x_4=1$}\label{cc3}
\begin{tikzpicture}[xscale=10,yscale=1.5]

\node(za) at (-4,0){\scalebox{0.75}{1}};
\node(zb) at (-3.6,0){\scalebox{0.75}{$x_2$}};
\node(zc) at (-3.2,0){\scalebox{0.75}{$x_3$}};
\node(zd) at (-2.8,0){\scalebox{0.75}{1}};

\node(zab) at (-3.8,0.2){\scalebox{0.75}{$x_1$}};
\node(zac) at (-3.4,0.2){\scalebox{0.75}{$\frac{x_2x_3+x_3x_4+x_1x_2}{x_1x_4}$}};
\node(zad) at (-3,0.2){\scalebox{0.75}{$x_4$}};

\node(zbb) at (-3.6,0.4){\scalebox{0.75}{$\frac{x_1+x_3}{x_4}$}};
\node(zbc) at (-3.2,0.4){\scalebox{0.75}{$\frac{x_2+x_4}{x_1}$}};

\node(zcc) at (-3.4,0.6){\scalebox{0.75}{1}};

\draw[-](za) edge (zab);
\draw[-](zb) edge (zac);
\draw[-](zc) edge (zad);

\draw[-](zbb) edge (zac);
\draw[-](zbc) edge (zad);

\draw[-](zbb) edge (zcc);

\draw[-](zab) edge (zb);
\draw[-](zac) edge (zc);
\draw[-](zad) edge (zd);

\draw[-](zab) edge (zbb);
\draw[-](zac) edge (zbc);

\draw[-](zcc) edge (zbc);

\node(zxa) at (-4,1){\scalebox{0.75}{1}};
\node(zxb) at (-3.6,1){\scalebox{0.75}{$\frac{x_1+x_3}{x_2}$}};
\node(zxc) at (-3.2,1){\scalebox{0.75}{$\frac{x_1x_2+x_1x_4+x_2x_3+x_3x_4}{x_2x_3x_4}$}};
\node(zxd) at (-2.8,1){\scalebox{0.75}{1}};

\node(zxab) at (-3.8,1.2){\scalebox{0.75}{$\frac{x_1x_2+x_1x_4+x_2x_3+x_3x_4}{x_1x_2x_4}$}};
\node(zxac) at (-3.4,1.2){\scalebox{0.75}{$\frac{x_3x_4+x_1x_4+x_1x_2}{x_2x_3}$}};
\node(zxad) at (-3,1.2){\scalebox{0.75}{$\frac{x_1+x_3}{x_4}$}};

\node(zxbb) at (-3.6,1.4){\scalebox{0.75}{$\frac{x_1x_2+x_1x_4+x_2x_3+x_3x_4}{x_1x_2x_3}$}};
\node(zxbc) at (-3.2,1.4){\scalebox{0.75}{$x_1$}};

\node(zxcc) at (-3.4,1.6){\scalebox{0.75}{1}};

\draw[-](zxa) edge(zxab);
\draw[-](zxb) edge(zxac);
\draw[-](zxc) edge(zxad);

\draw[-](zxbb) edge(zxac);
\draw[-](zxbc) edge(zxad);

\draw[-](zxbb) edge(zxcc);

\draw[-](zxab) edge(zxb);
\draw[-](zxac) edge(zxc);
\draw[-](zxad) edge(zxd);

\draw[-](zxab) edge(zxbb);
\draw[-](zxac) edge(zxbc);

\draw[-](zxcc) edge(zxbc);

\node(zxxa) at (-4,2){\scalebox{0.75}{1}};
\node(zxxb) at (-3.6,2){\scalebox{0.75}{$\frac{x_1x_2+x_1x_4+x_2x_3+x_3x_4}{x_1x_3x_4}$}};
\node(zxxc) at (-3.2,2){\scalebox{0.75}{$\frac{x_2+x_4}{x_1}$}};
\node(zxxd) at (-2.8,2){\scalebox{0.75}{1}};

\node(zxxab) at (-3.8,2.2){\scalebox{0.75}{$\frac{x_2+x_4}{x_3}$}};
\node(zxxac) at (-3.4,2.2){\scalebox{0.75}{$\frac{x_2x_3+x_3x_4+x_1x_2}{x_1x_4}$}};
\node(zxxad) at (-3,2.2){\scalebox{0.75}{$\frac{x_1x_2+x_1x_4+x_2x_3+x_3x_4}{x_1x_2x_3}$}};

\node(zxxbb) at (-3.6,2.4){\scalebox{0.75}{$x_2$}};
\node(zxxbc) at (-3.2,2.4){\scalebox{0.75}{$\frac{x_1x_2+x_1x_4+x_2x_3+x_3x_4}{x_1x_2x_4}$}};

\node(zxxcc) at (-3.4,2.6){\scalebox{0.75}{1}};

\draw[-](zxxa) edge(zxxab);
\draw[-](zxxb) edge(zxxac);
\draw[-](zxxc) edge(zxxad);

\draw[-](zxxbb) edge(zxxac);
\draw[-](zxxbc) edge(zxxad);

\draw[-](zxxbb) edge(zxxcc);

\draw[-](zxxab) edge(zxxb);
\draw[-](zxxac) edge(zxxc);
\draw[-](zxxad) edge(zxxd);

\draw[-](zxxab) edge(zxxbb);
\draw[-](zxxac) edge(zxxbc);

\draw[-](zxxcc) edge(zxxbc);

\node(zxxxa) at (-4,3){\scalebox{0.75}{1}};
\node(zxxxb) at (-3.6,3){\scalebox{0.75}{$x_4$}};
\node(zxxxc) at (-3.2,3){\scalebox{0.75}{$x_1$}};
\node(zxxxd) at (-2.8,3){\scalebox{0.75}{1}};

\node(zxxxab) at (-3.8,3.2){\scalebox{0.75}{$x_3$}};
\node(zxxxac) at (-3.4,3.2){\scalebox{0.75}{$\frac{x_3x_4+x_1x_4+x_1x_2}{x_2x_3}$}};
\node(zxxxad) at (-3,3.2){\scalebox{0.75}{$x_2$}};

\node(zxxxbb) at (-3.6,3.4){\scalebox{0.75}{$\frac{x_1+x_3}{x_2}$}};
\node(zxxxbc) at (-3.2,3.4){\scalebox{0.75}{$\frac{x_2+x_4}{x_3}$}};

\node(zxxxcc) at (-3.4,3.6){\scalebox{0.75}{1}};

\draw[-](zxxxa) edge(zxxxab);
\draw[-](zxxxb) edge(zxxxac);
\draw[-](zxxxc) edge(zxxxad);

\draw[-](zxxxbb) edge(zxxxac);
\draw[-](zxxxbc) edge(zxxxad);

\draw[-](zxxxbb) edge(zxxxcc);

\draw[-](zxxxab) edge(zxxxb);
\draw[-](zxxxac) edge(zxxxc);
\draw[-](zxxxad) edge(zxxxd);

\draw[-](zxxxab) edge(zxxxbb);
\draw[-](zxxxac) edge(zxxxbc);

\draw[-](zxxxcc) edge(zxxxbc);

\node(zxxxxa) at (-4,4){\scalebox{0.75}{1}};
\node(zxxxxb) at (-3.6,4){\scalebox{0.75}{$\frac{x_1+x_3}{x_4}$}};
\node(zxxxxc) at (-3.2,4){\scalebox{0.75}{$\frac{x_1x_2+x_1x_4+x_2x_3+x_3x_4}{x_1x_2x_4}$}};
\node(zxxxxd) at (-2.8,4){\scalebox{0.75}{1}};

\node(zxxxxab) at (-3.8,4.2){\scalebox{0.75}{$\frac{x_1x_2+x_1x_4+x_2x_3+x_3x_4}{x_2x_3x_4}$}};
\node(zxxxxac) at (-3.4,4.2){\scalebox{0.75}{$\frac{x_2x_3+x_3x_4+x_1x_2}{x_1x_4}$}};
\node(zxxxxad) at (-3,4.2){\scalebox{0.75}{$\frac{x_1+x_3}{x_2}$}};

\node(zxxxxbb) at (-3.6,4.4){\scalebox{0.75}{$\frac{x_1x_2+x_1x_4+x_2x_3+x_3x_4}{x_1x_3x_4}$}};
\node(zxxxxbc) at (-3.2,4.4){\scalebox{0.75}{$x_3$}};

\node(zxxxxcc) at (-3.4,4.6){\scalebox{0.75}{1}};

\draw[-](zxxxxa) edge(zxxxxab);
\draw[-](zxxxxb) edge(zxxxxac);
\draw[-](zxxxxc) edge(zxxxxad);

\draw[-](zxxxxbb) edge(zxxxxac);
\draw[-](zxxxxbc) edge(zxxxxad);

\draw[-](zxxxxbb) edge(zxxxxcc);

\draw[-](zxxxxab) edge(zxxxxb);
\draw[-](zxxxxac) edge(zxxxxc);
\draw[-](zxxxxad) edge(zxxxxd);

\draw[-](zxxxxab) edge(zxxxxbb);
\draw[-](zxxxxac) edge(zxxxxbc);

\draw[-](zxxxxcc) edge(zxxxxbc);

\node(zva) at (-4,5){\scalebox{0.75}{1}};
\node(zvb) at (-3.6,5){\scalebox{0.75}{$\frac{x_1x_2+x_1x_4+x_2x_3+x_3x_4}{x_1x_2x_3}$}};
\node(zvc) at (-3.2,5){\scalebox{0.75}{$\frac{x_2+x_4}{x_3}$}};
\node(zvd) at (-2.8,5){\scalebox{0.75}{1}};

\node(zvab) at (-3.8,5.2){\scalebox{0.75}{$\frac{x_2+x_4}{x_1}$}};
\node(zvac) at (-3.4,5.2){\scalebox{0.75}{$\frac{x_3x_4+x_1x_4+x_1x_2}{x_2x_3}$}};
\node(zvad) at (-3,5.2){\scalebox{0.75}{$\frac{x_1x_2+x_1x_4+x_2x_3+x_3x_4}{x_1x_3x_4}$}};

\node(zvbb) at (-3.6,5.4){\scalebox{0.75}{$x_4$}};
\node(zvbc) at (-3.2,5.4){\scalebox{0.75}{$\frac{x_1x_2+x_1x_4+x_2x_3+x_3x_4}{x_2x_3x_4}$}};

\node(zvcc) at (-3.4,5.6){\scalebox{0.75}{1}};

\draw[-](zva) edge(zvab);
\draw[-](zvb) edge(zvac);
\draw[-](zvc) edge(zvad);

\draw[-](zvbb) edge(zvac);
\draw[-](zvbc) edge(zvad);

\draw[-](zvbb) edge(zvcc);

\draw[-](zvab) edge(zvb);
\draw[-](zvac) edge(zvc);
\draw[-](zvad) edge(zvd);

\draw[-](zvab) edge(zvbb);
\draw[-](zvac) edge(zvbc);

\draw[-](zvcc) edge(zvbc);

\end{tikzpicture}
\end{figure}
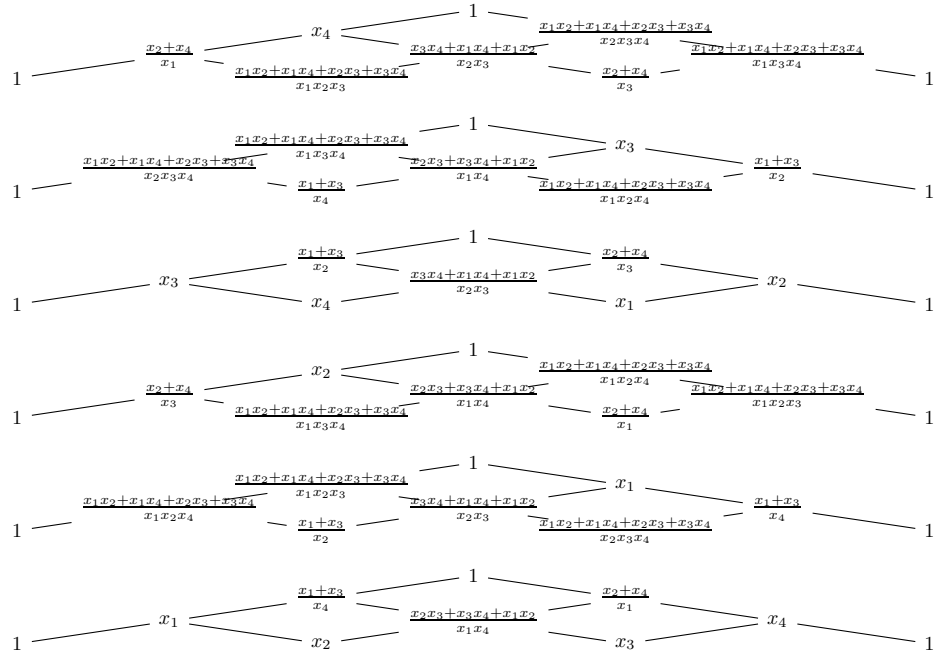

\section{Further Directions}

An early observation is that a (Coxeter) frieze pattern of width $(n-3)$ is completely determined by its first non-trivial row, and that this row consists of a sequence of $n$ integers repeated periodically. A sequence of $n$ integers that induces a frieze pattern (of width $(n-3)$) is called a \emph{quiddity sequence of order $n$}. 
Conway and Coxeter \cite{cc1} used quiddity sequences to proof that the frieze patterns of width $(n-3)$ are in bijection with the triangulations of an $n$-gon.  The proof is elementary, yet insightful. In a sequel paper \cite{mc2}, we determine the class of $\mathrm{SL}_3$-frieze patterns (alternatively $(3,n)$-frieze patterns) for which quiddity sequences have properties analogous to the classical notion. 

\section{Acknowledgements}
This paper was completed with the support of the Austrian Science Fund (FWF): W1230. I would also like to thank my PhD supervisor Karin Baur for her advice and support. In particular, the results presented here are an answer to her question of whether frieze patterns can be generalised in connection with higher Auslander-Reiten theory. 

\bibliographystyle{amsplain}
\bibliography{higherfriezepatterns}

\end{document}